\documentclass[10pt]{article}
\setlength{\topmargin}{-.6in}
\setlength{\oddsidemargin}{-0cm}
\setlength{\evensidemargin}{-1cm}
\setlength{\textwidth}{16.5cm}
\setlength{\textheight}{23cm}

\usepackage{amsmath,amssymb}
\usepackage{amsthm}
\usepackage[singlelinecheck=off]{caption}
\captionsetup{labelsep=period,justification=justified,singlelinecheck=off}
\usepackage{microtype}

\usepackage{subfig}   
\usepackage{xcolor}

\usepackage{graphicx}
\usepackage{algorithm,algpseudocode}
\usepackage{mathrsfs}
\usepackage{epstopdf}
\usepackage{float}
\usepackage{url}
\usepackage{accents}
\DeclareMathOperator{\Tr}{Tr}
\DeclareMathOperator{\rank}{rank}

\newtheorem{assumption}{Assumption}
\newtheorem{proposition}{Proposition}
\newtheorem{lemma}{Lemma}
\newtheorem{corollary}{Corollary}

\allowdisplaybreaks

\usepackage{xparse}

\newlength{\dhatheight}
\newcommand{\doublehat}[1]{%
	\settoheight{\dhatheight}{\ensuremath{\hat{#1}}}%
	\addtolength{\dhatheight}{-0.35ex}%
	\hat{\vphantom{\rule{1pt}{\dhatheight}}%
		\smash{\hat{#1}}}}
	
\title{Efficient construction of tensor ring representations from sampling
\thanks{The work of Y.K. and L.Y. is supported in part by the National Science Foundation under award DMS-1521830 and the U.S. Department of Energy’s Advanced Scientific Computing Research program under award DE-FC02-13ER26134/DE-SC0009409. The work of J.L. is supported in part by the National Science Foundation under award DMS-1454939.}}

\author{Yuehaw Khoo\thanks{Department of Mathematics, Stanford University, Stanford, CA 94305, USA
    (\texttt{ykhoo@stanford.edu}).}
  \and Jianfeng Lu\thanks{Department of Mathematics, Department of Chemistry and Department of
    Physics, Duke University, Durham, NC 27708, USA (\texttt{jianfeng@math.duke.edu}).}
  \and Lexing Ying\thanks{Department of Mathematics and ICME, Stanford University, Stanford, CA
    94305, USA (\texttt{lexing@stanford.edu}).}  }

\begin{document}
	
\maketitle

\begin{abstract}
  In this paper we propose an efficient method to compress a high
  dimensional function into a tensor ring format, based on alternating
  least-squares (ALS). Since the function has size exponential in $d$
  where $d$ is the number of dimensions, we propose efficient sampling
  scheme to obtain $O(d)$ important samples in order to learn the
  tensor ring. Furthermore, we devise an initialization method for ALS
  that allows fast convergence in practice. Numerical examples show
  that to approximate a function with similar accuracy, the tensor
  ring format provided by the proposed method has less parameters than
  tensor-train format and also better respects the structure of the
  original function.
\end{abstract}
	
\section{Introduction}

Consider a function $f:[n]^d \rightarrow \mathbb{R}$ which can be
treated as a tensor of size $n^d$ ($[n]:=\{1,\ldots,n\}$). In order to store and perform algebraic manipulation of the exponentially sized tensor, typically the tensor $f$ has to be decomposed into various low complexity formats. Most current applications involve the CP \cite{hitchcock1927cp} or Tucker decomposition \cite{hitchcock1927cp,tucker1966tucker}. However, the CP decomposition for general tensor is non-unique, whereas the components of Tucker decomposition have exponential size in $d$. The tensor train (TT)  \cite{oseledets2011tensor}, better known as the matrix product states (MPS) proposed earlier in physics literature (see e.g. \cite{AKLT:88,white1992density,perez2006matrix}), emerges as an alternative that breaks the curse of dimensionality while avoiding the ill-posedness issue in tensor decomposition. For this format, function compression and evaluation can be done in $O(d)$ complexity. The situation is however unclear when generalizing a TT to a tensor network. Therefore, in this paper, we consider the compression of a black box function $f$ into a \emph{tensor ring} (TR), i.e., to find 3-tensors $H^1,\ldots,H^d$ such that for $x:=(x_1,\ldots,x_d)\in[n]^d$
\begin{equation}
  \label{TR decomposition}
  f(x_1,\ldots,x_d) \approx \Tr \big( H^1(:,x_1,:)H^2(:,x_2,:)\cdots H^d(:,x_d,:)\big).
\end{equation}
Here $H^k\in \mathbb{R}^{r_{k-1}\times n \times r_{k}}, r_k\leq r$ and we often refer to
$(r_1,\ldots,r_d)$ as the TR rank. Such type of tensor format is a generalization of
the TT format for which $H^1 \in \mathbb{R}^{1\times n \times r_1}, H^d\in \mathbb{R}^{r_{d-1}\times n \times 1}$. The difference between TR and TT is illustrated in Figure~\ref{figure:TTTR} using tensor network diagrams introduced in
Section~\ref{section:notation}. Due to the exponential number of entries, typically we do not have
access to the entire tensor $f$. Therefore, TR format has to be found based on ``interpolation''
from $f(\Omega)$ where $\Omega$ is a subset of $[n]^d$. For simplicity, in the rest of the note, we
assume $r_1=r_2=\ldots=r_d = r$.
\begin{figure}[!ht]
  \begin{center}
	\includegraphics[width = 0.7\columnwidth,trim = 0cm 4cm 0cm
      2cm,clip]{./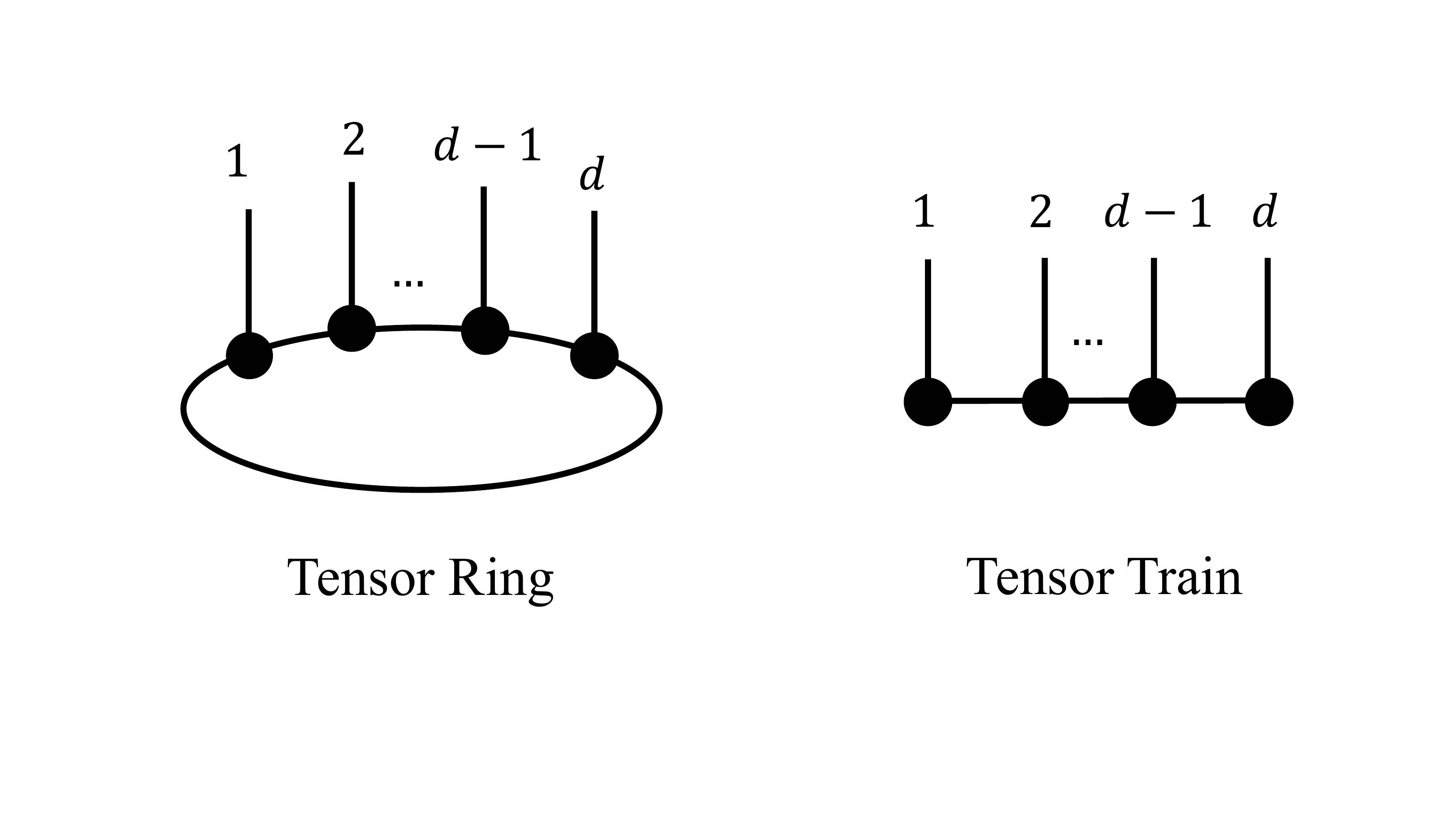}
  \end{center}
  \caption{Comparison between a tensor ring and a tensor train.}\label{figure:TTTR}
\end{figure}

\subsection{Notations}\label{section:notation}
We first summarize the notations used in this note and introduce tensor network diagrams for the
ease of presentation. Depending on the context, $f$ is often referred to as a $d$-tensor of size
$n^d$ (instead of a function). For a $p$-tensor $T$, given two disjoint subsets $\alpha,\beta
\subset [p]$ where $\alpha \cup \beta = [p]$, we use
\begin{equation}
  T_{\alpha; \beta}
\end{equation}
to denote the reshaping of $T$ into a matrix, where the dimensions corresponding to sets $\alpha$
and $\beta$ give rows and columns respectively. Often we need to sample the values of $f$ on a
subset of $[n]^d$ grid points. Let $\alpha$ and $\beta$ be two groups of dimensions where $\alpha
\cup \beta = [d], \alpha\cap \beta = \emptyset$, and $\Omega_1$ and $\Omega_2$ be some subsampled grid points along the subsets of
dimensions $\alpha$ and $\beta$ respectively. We use
\begin{equation}
  f(\Omega_1; \Omega_2):= f_{\alpha;\beta}(\Omega_1 \times \Omega_2)
\end{equation}
to indicate the operation of reshaping $f$ into a matrix, followed by rows and columns subsampling
according to $\Omega_1,\Omega_2$. For any vector $x\in [n]^d$ and any integer $i$, we let
\begin{equation}
  x_i := x_{[(i-1)\bmod d] + 1}.
\end{equation}
For a $p$-tensor $T$, we define its Frobenius norm as
\begin{equation}
  \|T\|_F := \biggl( \sum_{i_1,\ldots,i_p} T(i_1,\ldots,i_p)^2 \biggr)^{1/2}.
\end{equation}
The notation $\text{vec}(A)$ is used to denote the vectorization of a matrix $A$, formed by stacking
the columns of $A$ into a vector. For two sets $\alpha, \beta$, we also use the notation 
\begin{equation}
\alpha\setminus \beta := \{i \in \alpha\ \vert \ i\in \beta^c \}
\end{equation}
to denote the set difference between $\alpha,\beta$.

In this note, for the convenience of presentation, we use tensor network diagrams to represent tensors and contraction between them. A tensor is represented as a node, where the number of legs of a node indicates the dimensionality of the tensor. For example Figure~\ref{figure:tensor} shows a
3-tensor $A$ and a 4-tensor $B$. When joining edges between two tensors (for example in Figure~\ref{figure:contract} we join the third leg of $A$ and first leg of $B$), we mean (with the implicit assumption that the dimensions represented by these legs have the same size)
\begin{equation}
  \sum_{k} A_{i_1 i_2 k} B_{k j_2 j_3 j_4}.
\end{equation}
See the review article \cite{Orus:13} for a more complete introduction of tensor network diagrams.
	
\begin{figure}[!ht]
	  \subfloat[\label{figure:tensor}]{%
			\includegraphics[width = 0.5\columnwidth,trim = 2cm 6cm 2cm 5cm,clip]{./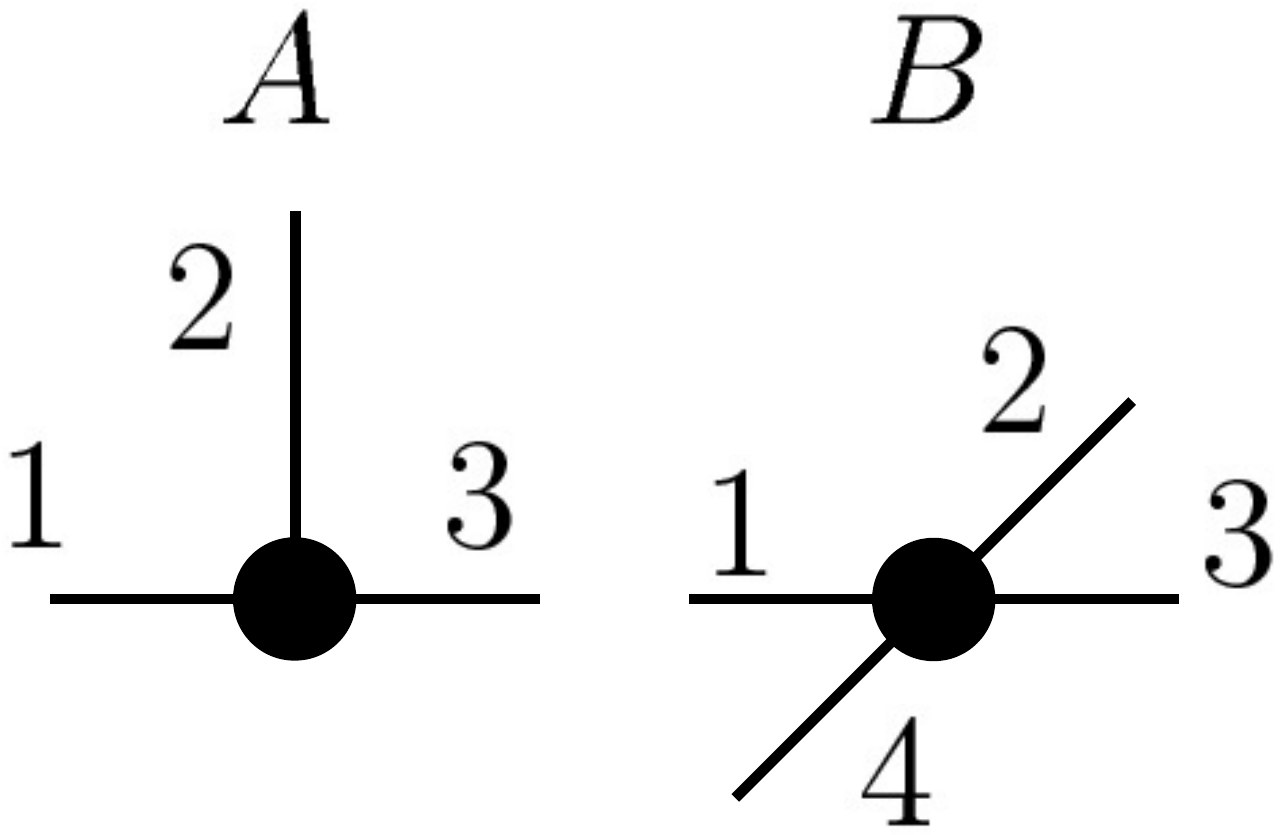}
		}
		\hfill
		\subfloat[\label{figure:contract}]{%
			\includegraphics[width = 0.5\columnwidth,trim = 0cm 7cm 0cm 5cm,clip]{./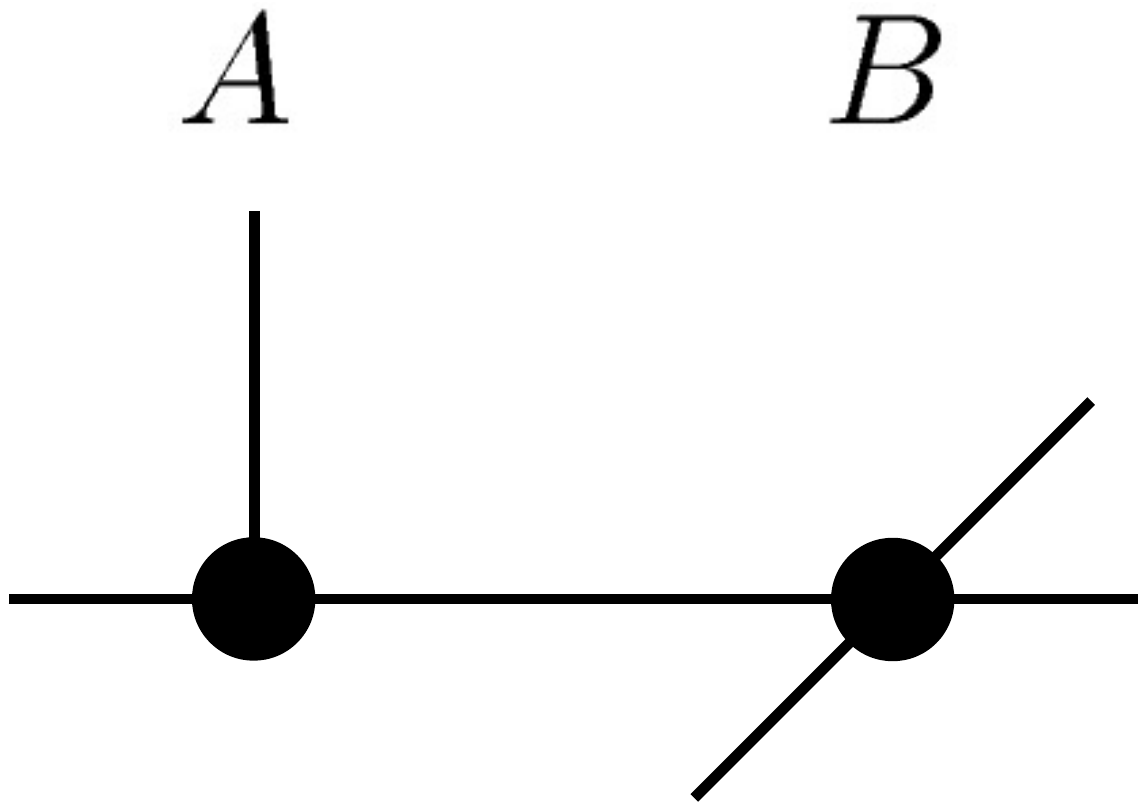}
		}
		\caption{(a) Tensor diagram for a 3-tensor $A$ and a 4-tensor $B$. (b) Contraction between tensors $A$ and $B$.}
\end{figure}

\subsection{Previous approaches}
In this section, we survey previous approaches for compressing a blackbox function into TT or TR. In \cite{oseledets2010tt}, successive CUR (skeleton) decompositions \cite{gantmacher2005applications} are applied to find a decomposition of tensor $f$ in TT format. In
\cite{espig2012note}, a similar scheme is applied to find a TR decomposition of the tensor. A
crucial step in \cite{espig2012note} is to ``disentangle'' one of the 3-tensors $H^k$'s, say $H^1$,
from the tensor ring. First, $f$ is treated as a matrix where the first dimension of $f$ gives rows,
the $2$-nd, $3$-rd, $\ldots,d$-th dimensions of $f$ give columns, i.e., reshaping $f$ to
$f_{1;[d]\setminus 1}$. Then CUR decomposition is applied such that
\begin{equation}
	  f_{1;[d]\setminus 1} = C U R
\end{equation}
and the matrix $C\in \mathbb{R}^{n\times r^2}$ in the decomposition is regarded as $H^1_{2;3,1}$
(the $R$ part in CUR decomposition is never formed due to its exponential size). As noted by the
authors in \cite{espig2012note}, a shortcoming of the method lies in the reshaping of $C$ into
$H^1$. As in any factorization of low-rank matrix, there is an inherent ambiguity for CUR
decomposition in that $CUR = CA A^{-1} UR$ for any invertible matrix $A$. Such ambiguity in
determining $H^1$ may lead to large tensor-ring rank in the subsequent determination of
$H^2,H^3\ldots,H^d$. More recently, \cite{zhao2016tensor} proposes various ALS-based techniques to
determine the TR decomposition of a tensor $f$. However, they only consider the situation where
entries of $f$ are fully observed, which limits the applicability of their algorithms to the case
with rather small $d$. Moreover, depends on the initialization, ALS can suffer from slow convergence. In \cite{wang2017efficient}, ALS is used to determine the TR in a more general setting where only partial observations of the function $f$ are given. In this paper,  we further assume the freedom to observe any $O(d)$ entries from the tensor $f$. As we shall see, leveraging such freedom, the complexity of the iterations can be reduced significantly compare to the ALS procedure in \cite{wang2017efficient}.

\subsection{Our contributions}
In this paper, assuming $f$ admits a rank-$r$ TR decomposition, we propose an ALS-based two-phase
method to reconstruct the TR when only a few entries of $f$ can be sampled. Here we summarize our contributions.
\begin{enumerate}
\item  The optimization problem of finding the TR decomposition is non-convex
  hence requires good initialization in general. We devise method for initializing $H^1,\ldots,H^d$
  that helps to resolve the aforementioned ambiguity issues via certain probabilistic assumption on the
  function $f$.
\item When updating each 3-tensors in the TR, it is infeasible to use all the entries of $f$. We
  devise a hierarchical strategy to choose the samples of $f$ efficiently via interpolative
  decomposition. Furthermore, the samples are chosen in a way that makes the per iteration complexity of the ALS linear in $d$. 
\end{enumerate}

While we focus in this note the problem of construction tensor ring format, the above proposed
strategies can be applied to tensor networks in higher spatial configuration (like PEPS, see e.g.,
\cite{Orus:13}), which will be considered in future works. 

The paper is organized as followed. In Section \ref{section:method} we detail the proposed
algorithm. In Section \ref{section:Motivation}, we provide intuition and theoretical guarantess to motivate the proposed initialization procedure, based on certain probabilistic assumption on $f$. In Section \ref{section:numerical}, we demonstrate the effectiveness of our methods through numerical examples. Finally we conclude the paper in Section \ref{section:conclusion}. 

\section{Proposed method} \label{section:method}

In order to find a tensor ring decomposition (\ref{TR decomposition}), our overall strategy is to
solve the minimization problem
\begin{equation}
	  \label{full variational}
	\min_{H^1,\ldots,H^d} \sum_{x\in [n]^d} \big(\Tr(H^1[x_1]\cdots H^d[x_d])- f(x_1,\ldots,x_d)\big)^2
\end{equation}
where
\[
H^k[x_k] := H^k(:,x_k,:)\in \mathbb{R}^{r\times r}
\]
denotes the $x_k$-th slice of the 3-tensor $H^k$ along the second dimension. It is computationally
infeasible just to set up problem (\ref{full variational}), as we need to evaluate $f$ $n^d$
times. Therefore, analogous to the matrix or CP-tensor completion problem \cite{candes2009exact,yuan2016tensor}, a ``tensor ring completion'' problem \cite{wang2017efficient}
\begin{equation}
	\label{variational}
	\min_{H^1,\ldots,H^d} \sum_{x \in \Omega} \big( \Tr(H^1[x_1]\cdots H^d[x_d])- f(x_1,\ldots,x_d) \big)^2
\end{equation}
where $\Omega$ is a subset of $[n]^d$ should be solved instead. Since there are a total of $dnr^2$
parameters for the tensors $H^1,\ldots,H^d$, there is hope that by observing a small number of
entries in $f$ (at least $O(ndr^2)$), we can obtain the rank-$r$ TR.
	
A standard approach for solving the minimization problem of the type (\ref{variational}) is via
alternating least-squares (ALS). At every iteration of ALS, a particular $H^k$ is treated as
variable while $H^l, \l\neq k$ are kept fixed. Then $H^k$ is optimized w.r.t. the least-squares cost
in (\ref{variational}). More precisely, to determine $H^k$, we solve
\begin{equation}
  \label{k-th least squares}
	\min_{H^k} \sum_{x\in \Omega} \bigl(\Tr(H^k[x_k] C^{x\setminus x_k} )- f(x)\bigr)^2,
\end{equation}
where each coefficient matrix
\begin{equation}
	\label{coefficient matrix}
	C^{x\setminus x_k}  :=  H^{k+1}[x_{k+1}]\ldots H^{d}[x_{d}] H^{1}[x_{1}]\ldots H^{k-1}[x_{k-1}],\quad x\in \Omega.
\end{equation}
By an abuse of notation, we use $x\setminus x_k$ to denote the exclusion of $x_k$ from the $d$-tuple $x$. As mentioned previously, $\vert \Omega \vert$ should be at least $O(ndr^2)$ in order to determine the tensor ring decomposition. This creates a large
computational cost in each iteration of the ALS, as it takes $\vert \Omega \vert (d-1)$ (which has $O(d^2)$ scaling as $\vert \Omega \vert$ has size $O(d)$) matrix
multiplications just to construct $C^{x\setminus x_k}$ for all $x\in \Omega$. When $d$ is large, such quadratic scaling in $d$ for setting up the least-squares problem in each iteration of the ALS is undesirable. 

The following simple but crucial observation allows us to gain a further speed up. Although $O(ndr^2)$ observations of $f$ are required to determine all the components $H^1,\ldots,H^d$,  when it comes to determining each individual $H^k$ via solving the linear system \eqref{k-th least squares}, only $O(nr^2)$ equations are required for the well-posedness of the linear system. This motivates us to use different $\Omega_k$'s each having size $O(nr^2)$ (with $\vert \Omega_1 \vert + \cdots  + \vert \Omega_k \vert \sim O(ndr^2)$ ) to determine different
$H^k$'s in the ALS steps instead of using a fixed set $\Omega$ with size $O(ndr^2)$ for $H^k$'s. If $\Omega_k$ is constructed from
densely sampling the dimensions near $k$ (where neighborhood is defined according to ring geometry)
while sparsely sampling the dimensions far away from $k$, computational savings can be achieved. The
specific construction of $\Omega_k$ is made precise in Section \ref{section: construct envi}. We
further remark that if
\begin{equation}
  \Tr(H^k[x_k] C^{x\setminus x_k} ) \approx f(x)
\end{equation}
holds with small error for every $x\in [n]^d$, then using any $\Omega_k\in [n]^d$ in place
of $\Omega$ in (\ref{k-th least squares}) should give similar solutions, as long as (\ref{k-th least
  squares}) is well-posed. Therefore, we solve
\begin{equation}
	\label{k-th least squares indep}
	\min_{H^k} \sum_{x\in \Omega_k} \bigl(\Tr(H^k[x_k] C^{x\setminus x_k} )- f(x)\bigr)^2
\end{equation}
instead of \eqref{k-th least squares} in each step of the ALS where the index sets $\Omega_k$'s depend on $k$. We note that
in practice, a regularization term $\lambda \sigma_k \|H^k(x_k) \|_F^2$ is added to the cost in
(\ref{k-th least squares indep}) to reduce numerical instability resulting from potential high
condition number of the least-squares problem (\ref{k-th least squares indep}). In all of our
experiments, $\lambda$ is set to $10^{-9}$ and $\sigma_k$ is the top singular values of the Hessian
of the least-squares problem (\ref{k-th least squares indep}). The quality of TR is rather
insensitive to the choice of $\lambda$ as long as the value is kept small.
	
At this point it is clear that there are two issues needed to be addressed. The first issue is
concerning the choice of $\Omega_k, k\in[d]$. Another issue is that non-convex nature of the tensor ring completion problem \ref{variational} may cause ALS to converge to a poor local minima. We solve the first issue using a hierarchical sampling strategy. As for the second issue, by making certain probabilistic assumption on $f$, we are able to obtain cheap and
intuitive initialization that allows fast convergence. Before moving on, we summarize the full algorithm in Algorithm~\ref{alg:
  algorithm1}. The steps of Algorithm~\ref{alg: algorithm1} are further detailed in Section \ref{section: construct envi}, \ref{section: initialization}, and \ref{section:ALS}.
\begin{algorithm}[ht]
  \caption{Alternating least squares}\label{alg: algorithm1}
  \begin{algorithmic}[1]
	\Require 
    \Statex Function $f:[n]^d\rightarrow \mathbb{R}$.
	\Ensure 
    \Statex Tensor ring $H^1,\ldots,H^d\in\mathbb{R}^{r\times n \times r}$.
	\vspace{1mm}
	\State Identify the index sets $\Omega_k$'s and compute $f(\Omega_k)$ for each $k\in[d]$ (Section~\ref{section: construct envi}).
	\State Initialize $H^1,\ldots,H^d$ (Section~\ref{section: initialization}).
	\State Start ALS by solving (\ref{k-th least squares indep}) for each $k\in[d]$ (Section~\ref{section:ALS}).
  \end{algorithmic}
\end{algorithm}

\subsection{Constructing $\Omega_k$}
\label{section: construct envi}
In this section, we detail the construction of $\Omega_k$ for each $k\in[d]$. We first construct an
index set $\Omega^\text{envi}_k \subset [n]^{d-3}$ with fixed size $s$. The elements in
$\Omega^\text{envi}_k$ corresponds to different choices of indices for the $[d]\setminus
\{k-1,k,k+1\}$-th dimensions of the function $f$. Then for each of the elements in
$\Omega^\text{envi}_k$, we sample all possible indices from the $(k-1)$-th, $k$-th, $(k+1)$-th
dimensions of $f$ to construct $\Omega_k$, i.e., letting
\begin{equation}
  \label{nodewise sample}
  \Omega_{k} = [n]^3\times \Omega^\text{envi}_k.
\end{equation}
We let $\vert \Omega^\text{envi}_k \vert = s$ for all $k$ where $s$ is a constant that does not depend of the dimension $d$.  In this case, when determining $C^{x\setminus x_k}, x\in \Omega_{k}$ in (\ref{k-th least squares indep}), only
$O(\vert \Omega^\text{envi}_k \vert d)$ multiplications of $r\times r$ matrices are needed, giving a complexity that is linear in $d$ when setting up the least-squares problem. We want to emphasize that although naively it seems that $O(n^3)$ samples are needed to construct $\Omega_k$ in \eqref{nodewise sample}, the $n^3$ samples corresponding to each sample in  $\Omega^\text{envi}_k$ can be obtained via applying interpolative decomposition \cite{friedland2011fast} to the $n\times n \times n$ tensor with $O(n)$ observations. 

It remains that $\Omega^\text{envi}_k$'s need to be constructed. There are two criteria we use for
constructing $\Omega^\text{envi}_k, k\in[d]$. First, we want the range of $f_{k;[d]\setminus
  k}(\Omega_k)$ to be the same as the range of $f_{k;[d]\setminus k}$. Since we expect 
\begin{equation}
\label{least-squares indep reshaped}
H^k_{2;3,1} [\text{vec}(C^{x\setminus x_k})]_{x\in \Omega_k} \approx f_{k;[d]\setminus k}(\Omega_k)
\end{equation}
is enforced through the least-squares in \eqref{k-th least squares indep}, the range of $H^k_{2;3,1}$ is similar to the range of $f_{k;[d]\setminus k}(\Omega_k)$. On the other hand, as we expect the optimal $H^k$ to satisfy 
\begin{equation}
H^k_{2;3,1} [\text{vec}(C^{x\setminus x_k})]_{x\in [n]^d} = f_{k;[d]\setminus k},
\end{equation}
for all the entries of $f$,  then
\begin{equation}
\label{full least-squares reshaped}
\text{Range}(f_{k;[d]\setminus k}(\Omega_k)) = \text{Range}( f_{k;[d]\setminus k}).
\end{equation}
Eq. \eqref{least-squares indep reshaped} and \eqref{full least-squares reshaped} lead us to require that 
\begin{equation}
\text{Range}(f_{k;[d]\setminus k}(\Omega_k)) = \text{Range}(f_{k;[d]\setminus k}(\Omega_k)).
\end{equation}
Here we emphasize that it
is possible to reshape $f(\Omega_k)$ into a matrix $f_{k;[d]\setminus k}(\Omega_k)$ as in \eqref{least-squares indep reshaped} due to the
product structure of $\Omega_k$ in (\ref{nodewise sample}), where the indices along dimension $k$
are fully sampled. The second criteria is that we require the cost in (\ref{k-th least squares
  indep}) to approximate the cost in (\ref{full variational}).

To meet the first criteria, we propose a hierarchical strategy to determine $\Omega^\text{envi}_k$
such that $f_{k;[d]\setminus k}(\Omega_k)$ has large singular values. Assuming $d=3\cdot 2^L$ for
some natural number $L$, we summarize such strategy in Algorithm~\ref{alg:upward pass} (the upward pass)
and \ref{alg:downward pass} (the downward pass). The dimensions are divided into groups of size $3
\cdot 2^{L-l}$ on each level $l$ for $l = 1, \ldots, L$. We emphasize that level $l=1$ corresponds to the coarsest partitioning of the dimensions of the tensor $f$. The purpose of the upward pass is to
hierarchically find \emph{skeletons} $\Theta_{k}^{\text{in}, l}$ which represent the $k$-th group of
indices, while the downward pass hierarchically constructs representative environment skeletons
$\Theta_k^{\text{envi}, l}$. At each level, the skeletons are found by using rank revealing QR (RRQR)
factorization \cite{hong1992rank}.
\begin{algorithm}[H]
  \caption{Upward pass }\label{alg:upward pass}
  \begin{algorithmic}[1]
	\Require
	\Statex Function $f:[n]^d\rightarrow \mathbb{R}$, number of skeletons $s$.
	\Ensure
	\Statex Skeleton sets $\Theta^{\text{in},l}_k$'s
	\State Decimate the number of dimensions by clustering every three dimensions. More precisely, for each $k\in[2^L]$, let
	\[
    \tilde \Theta_k^{\text{in},L} := \{(x_{3k-2},x_{3k-1},x_{3k})\ \vert\ x_{3k-2},x_{3k-1},x_{3k} \in [n] \}.
    \]
	There are $2^L$ index-sets after this step. For each $k\in[2^L]$, construct the set of environment \emph{skeletons}
	\begin{equation}
	  \Theta_k^{\text{envi},l} \subset [n]^{d-3},
	\end{equation}
	with $s$ elements either by selecting multi-indices from
    $[n]^{d-3}$ randomly, or by using the output of
    Algorithm~\ref{alg:downward pass} (when an iteration of upward
    and downward passes is employed). This step is illustrated
    in the following figure.
	\begin{center}
	  \includegraphics[width = 0.8\columnwidth]{./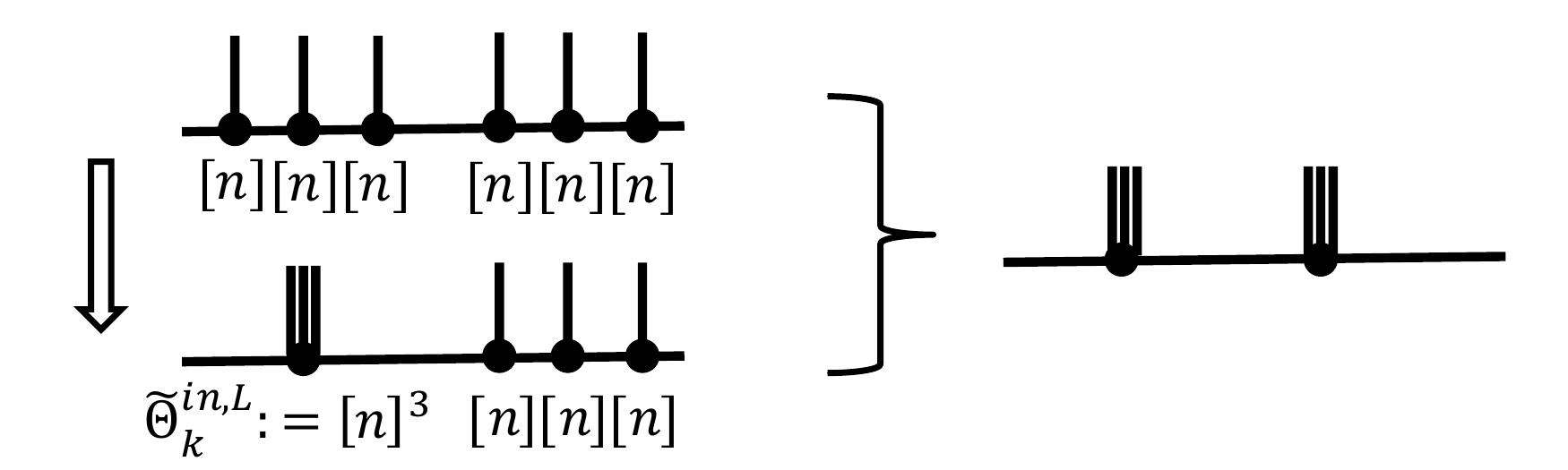}
	\end{center}
      
      (continued on page~\pageref{alg:upward
        pass2}.)
    
	\algstore{upward}
  \end{algorithmic}
\end{algorithm}

\begin{algorithm}[H]
  \begin{algorithmic}\label{alg:upward pass2}
	\algrestore{upward}
	\Statex \hspace{\algorithmicindent} (continued from Algorithm~\ref{alg:upward pass}.)
	\Statex  \hspace{\algorithmicindent}

	\Statex \textbf{for} {$l=L$ to $l=1$}
	\State
    \hspace{\algorithmicindent} \parbox[t]{\dimexpr\linewidth-\algorithmicindent}{Find the skeletons
      within each index-set $\tilde \Theta_{k}^{\text{in},l}$, $k\in[2^l]$ where the elements in
      each $\tilde \Theta_{k}^{\text{in},l}$ are multi-indices of length $3\cdot 2^{L-l}$. Apply RRQR factorization to the
      matrix
	  \begin{equation}
		f( \Theta_k^{\text{envi},l}; \tilde \Theta_k^{\text{in},l}) \in \mathbb{R}^{s\times \vert \tilde \Theta_k^{\text{in},l} \vert }
	  \end{equation}
      to select $s$ columns that best resembles the range of $f( \Theta_k^{\text{envi},l};
      \tilde \Theta_k^{\text{in},l}) $. The multi-indices for these $s$ columns form the set
      $\Theta_k^{\text{in},l}$. Store $\Theta_k^{\text{in},l}$ for each $k\in[2^l]$. This step
      is illustrated in the following figure, where the thick lines are used to denote the
      index-sets with size larger than $s$. 
    \begin{center}
	  \includegraphics[width = 0.8\columnwidth]{./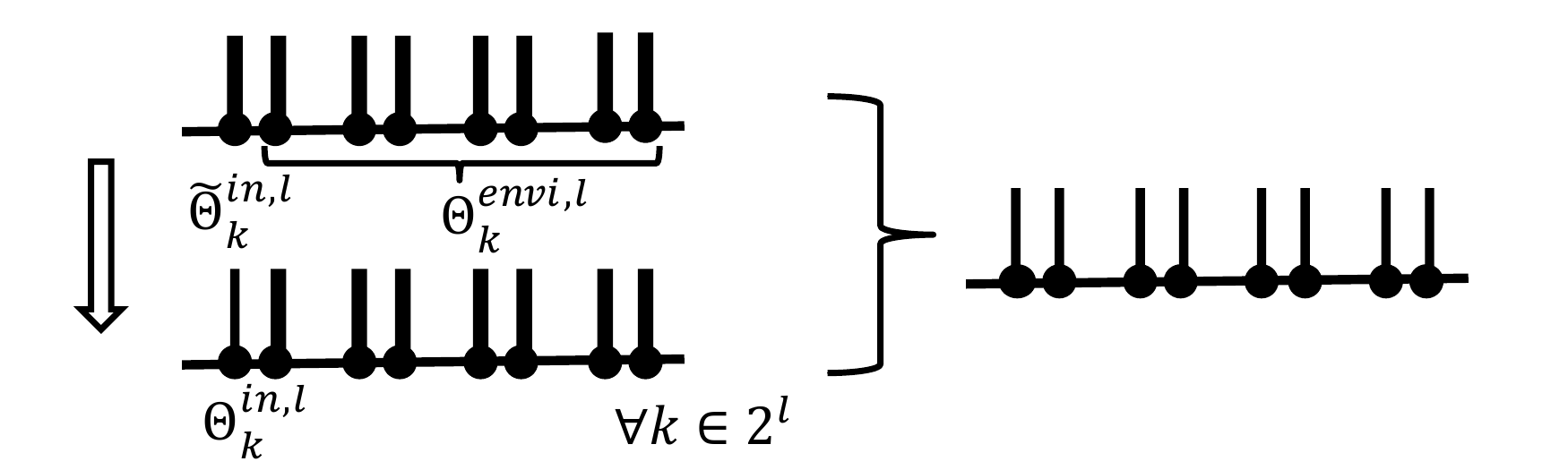}
	\end{center}\strut}

	\State \hspace{\algorithmicindent}  \parbox[t]{\dimexpr\linewidth-\algorithmicindent}{If $l>1$, for each $k\in [2^{l-1}]$, construct
	  \begin{equation}
		\tilde \Theta_{k}^{\text{in},l-1} := \Theta_{2k-1}^{\text{in},l} \times \Theta_{2k}^{\text{in},l}. 
	  \end{equation}
	  Then, sample $s$ elements randomly from
	  \begin{equation}
		\prod_{j\in [2^{l}]\setminus \{2k-1,2k\}} \Theta_j^{\text{in},l}
	  \end{equation} 
	  to form $\Theta_k^{\text{envi},l-1}$, or by using the output of Algorithm~\ref{alg:downward pass}
      (when an iteration of upward and downward passes  is employed). This step is depicted in the
      next figure, and again thick lines are used to denote the index-sets with size larger than
      $d$.\strut}
	\begin{center}
	  \includegraphics[width = 0.8\columnwidth]{./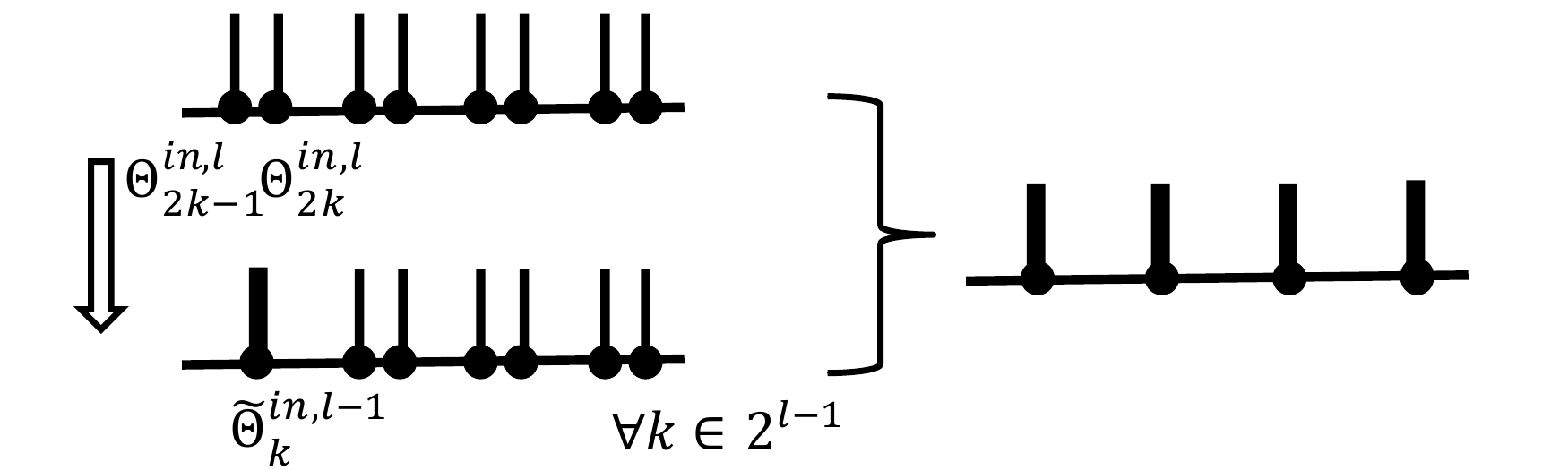}
	\end{center}
    \Statex \hspace{\algorithmicindent} \textbf{end for}
  \end{algorithmic}
\end{algorithm}

After a full upward-downward pass where RRQR are called
$O(d\log d)$ times, $\Theta_k^{\text{envi},L}$ with $k\in[2^L]$ are obtained. Then another upward
pass can be re-initiated. Instead of sampling new $\Theta_k^{\text{envi},l}$'s, the stored
$\Theta_k^{\text{envi},l}$'s in the downward pass are used. Multiple upward-downward passes can be
called to further improved these skeletons. Finally, we let
\begin{equation}
  \Omega_{3k-1}^\text{envi} := \Theta_k^\text{envi},\quad k\in[2^L].
\end{equation}
Observe that we have only obtained $\Omega_k^\text{envi}$ for $k=2,5,\ldots,d-1$. Therefore, we need
to apply upward-downward pass to different groupings of tensor $f$'s dimensions in step (1) of the
upward pass. More precisely, we group the dimensions as $(2, 3, 4), (5,6,7),\ldots,(d-1,d,1)$ and
$(d,1,2), (3,4,5),\ldots, (d-3,d-2,d-1)$ when initializing the upward pass to determine
$\Omega_k^\text{envi}$ with $k=3,6,\ldots,d$ and $k=1,4,\ldots,d-2$ respectively.


\begin{algorithm}[H]
  
  \caption{Downward pass }\label{alg:downward pass}
  \begin{algorithmic}[1]
	\Require
	\Statex Function $f:[n]^d\rightarrow \mathbb{R}$, $\Theta^{\text{in},l}_k$'s from the upward pass, number of skeletons $s$.
	\Ensure
	\Statex Skeletons $\Theta^{\text{envi},l}_k$'s
	\State Let $\Theta_1^{\text{envi},1} = \Theta_2^{\text{in},1}$,  $\Theta_2^{\text{envi},1} = \Theta_1^{\text{in},1}$.
	\begin{center}
	  \includegraphics[width = 0.8\columnwidth]{./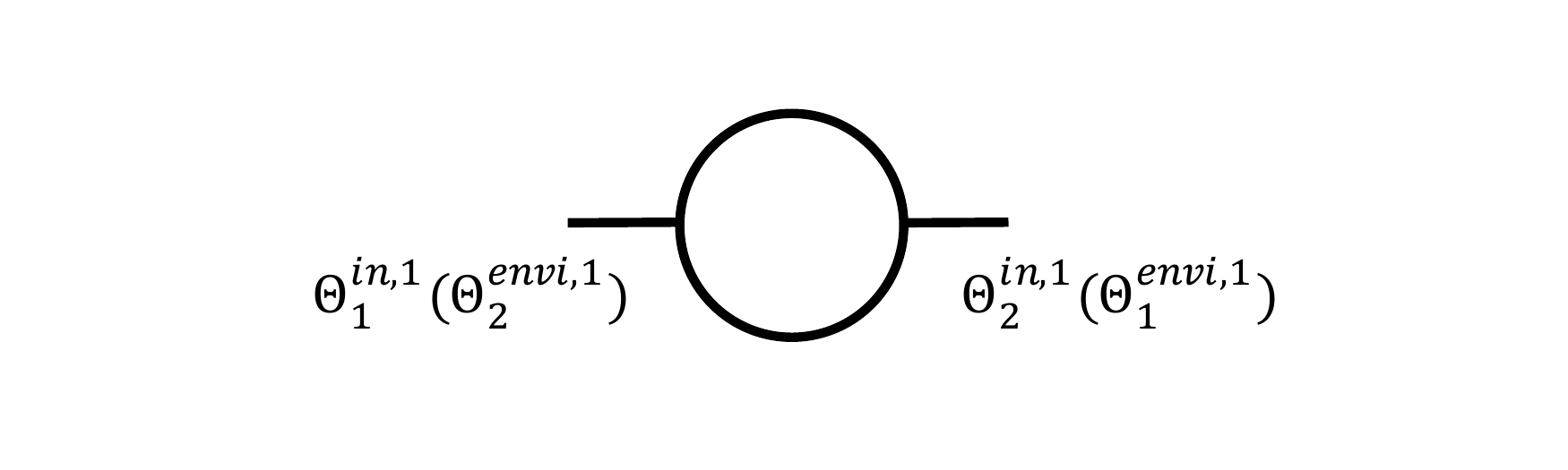}
	\end{center}
	\Statex \textbf{for} {$l=2$ to $l=L$}
    \State \hspace{\algorithmicindent} \parbox[t]{\dimexpr\linewidth-\algorithmicindent}{For each
      $k\in [2^l]$, we obtain $\Theta_k^{\text{envi},l}$ by applying RRQR factorization to
	  \begin{equation}
		f(\Theta_k^{\text{in},l}; \Theta_{k+1}^{\text{in},l}\times \Theta_{(k+1)/2}^{\text{envi} , l-1})
	  \end{equation}
	  or
	  \begin{equation}
		f(\Theta_k^{\text{in},l}; \Theta_{k-1}^{\text{in},l}\times \Theta_{k/2}^{\text{envi} , l-1})
	  \end{equation}
	  for odd or even $k$ respectively to obtain $s$ important columns. The multi-indices
      corresponding to these $s$ columns are used to update $\Theta_k^{\text{envi},l}$. The
      selection of the environment skeletons when $k$ is odd is illustrated in the next
      figure. \strut}
	\begin{center}
	  \includegraphics[width = 0.8\columnwidth]{./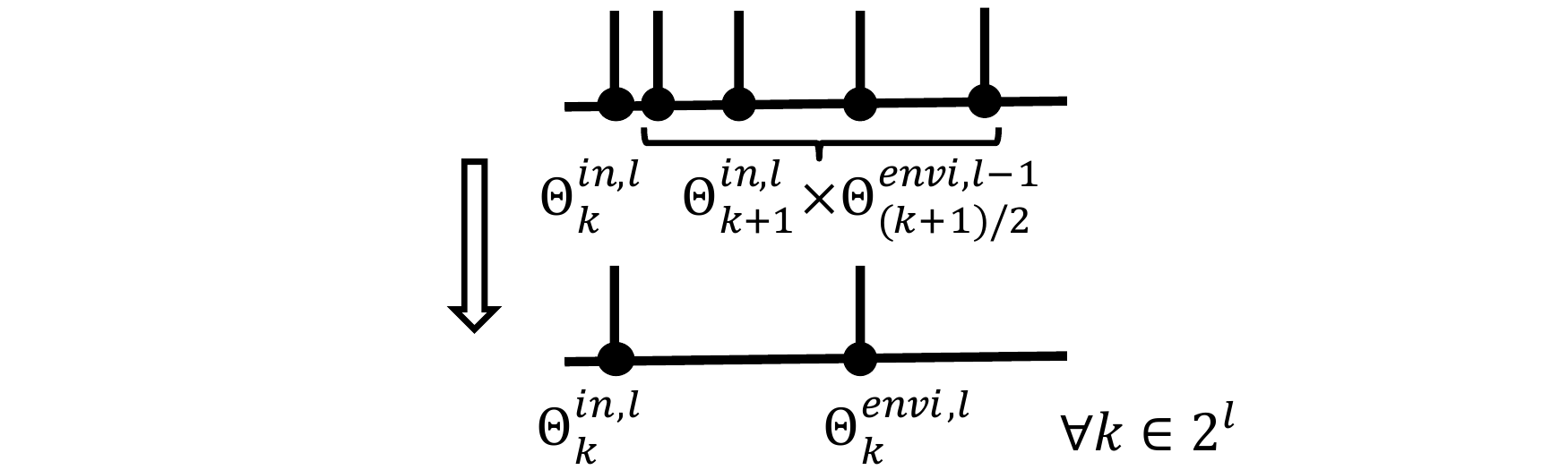}
	\end{center}
    \Statex \textbf{end for}
  \end{algorithmic}
\end{algorithm}
	

Finally, to meet the second criteria that the cost in (\ref{k-th least squares indep}) should
approximate the cost in (\ref{full variational}), to each $\Omega_k^\text{envi}$, we add extra
samples $x\in [n]^{d-3}$ by sampling $x_i$'s uniformly and independently from $[n]$. We typically
sample an extra $5s$ samples to each $\Omega_k^\text{envi}$.  This completes the construction for
$\Omega_k^\text{envi}$ 's and their corresponding $\Omega_k$'s in Algorithm~\ref{alg: algorithm1}.

\subsection{Initialization}
\label{section: initialization}
Due to the nonlinearity of the optimization problem (\ref{variational}), it is possible for ALS to
get stuck at local minima or saddle points. A good initialization is crucial for the success of
ALS. One possibility is to use the ``opening'' procedure in \cite{espig2012note} to obtain each
3-tensors. As mentioned previously, this may suffer ambiguity issue, leading us to
consider a different approach. The proposed initialization procedure consists of two steps. First
we obtain $H^k$'s up to gauges $G^k$'s between them (Algorithm~\ref{alg:initialization}). Then we solve
$d$ least-squares problem to fix the gauges between the $H^k$'s (Algorithm~\ref{alg:gauge}). More
precisely, after Algorithm~\ref{alg:initialization}, we want to use $T^{k,C}$ as $H^k$. However, as in
any factorization, SVD can only determine the factorization of $T^{k,C}$ up to gauge transformations, as
shown in Figure~\ref{figure:fix gauge}. Therefore, between $T^{k,C}$ and $T^{k+1,C}$, some appropriate
gauge $G^k$ has to be inserted (Figure~\ref{figure:fix gauge}).
	
After gauge fixing, we complete the initialization step in Algorithm~\ref{alg: algorithm1}. Before moving
on, we demonstrate the superiority of this initialization v.s. random initialization. In
Figure~\ref{figure:conv} we plot the error between TR and the full function v.s. number of iterations
in ALS, when using the proposed initialization and random initialization. By random initialization,
we mean the $H^k$'s are initialized by sampling their entries independently from the normal
distribution. Then ALS is performed on the example detailed in Section \ref{section:PDE} with
$n=3,d=12$. We set the TR rank to be $r=3$. As we can see, after one iteration of ALS, we already
obtain $10^{-4}$ error using our proposed method, whereas with random initialization, the
convergence of ALS is slower and the solution has a lower accuracy.
	
\begin{figure}[!ht]
  
  \begin{center}
	\includegraphics[width = 0.42\columnwidth,trim = 8cm 6cm 7cm 6cm,clip]{./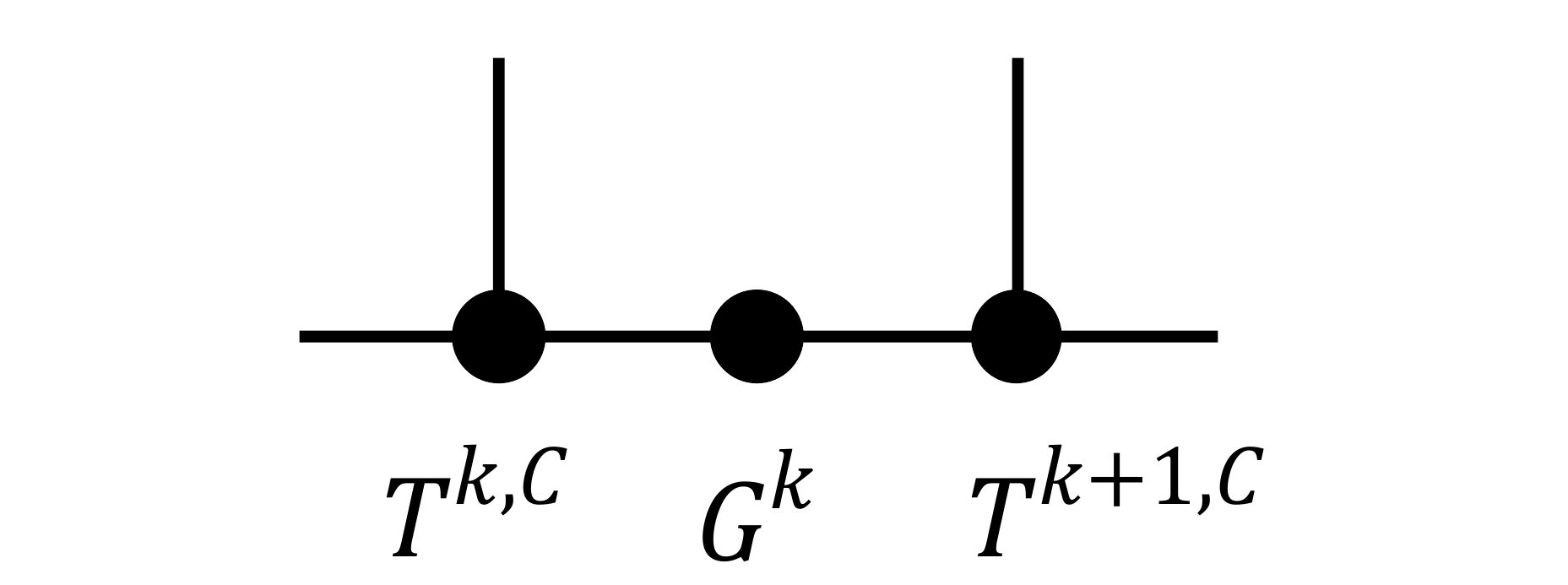}
  \end{center}
  \vspace{-2em}
  \caption{A gauge $G^k$ needs to be inserted between $T^{k,C}$ and $T^{k+1,C}$}\label{figure:fix gauge}
\end{figure}
\begin{figure}[!h]
  \vspace{-2em}
  \begin{center}
	\includegraphics[width=0.5\textwidth]{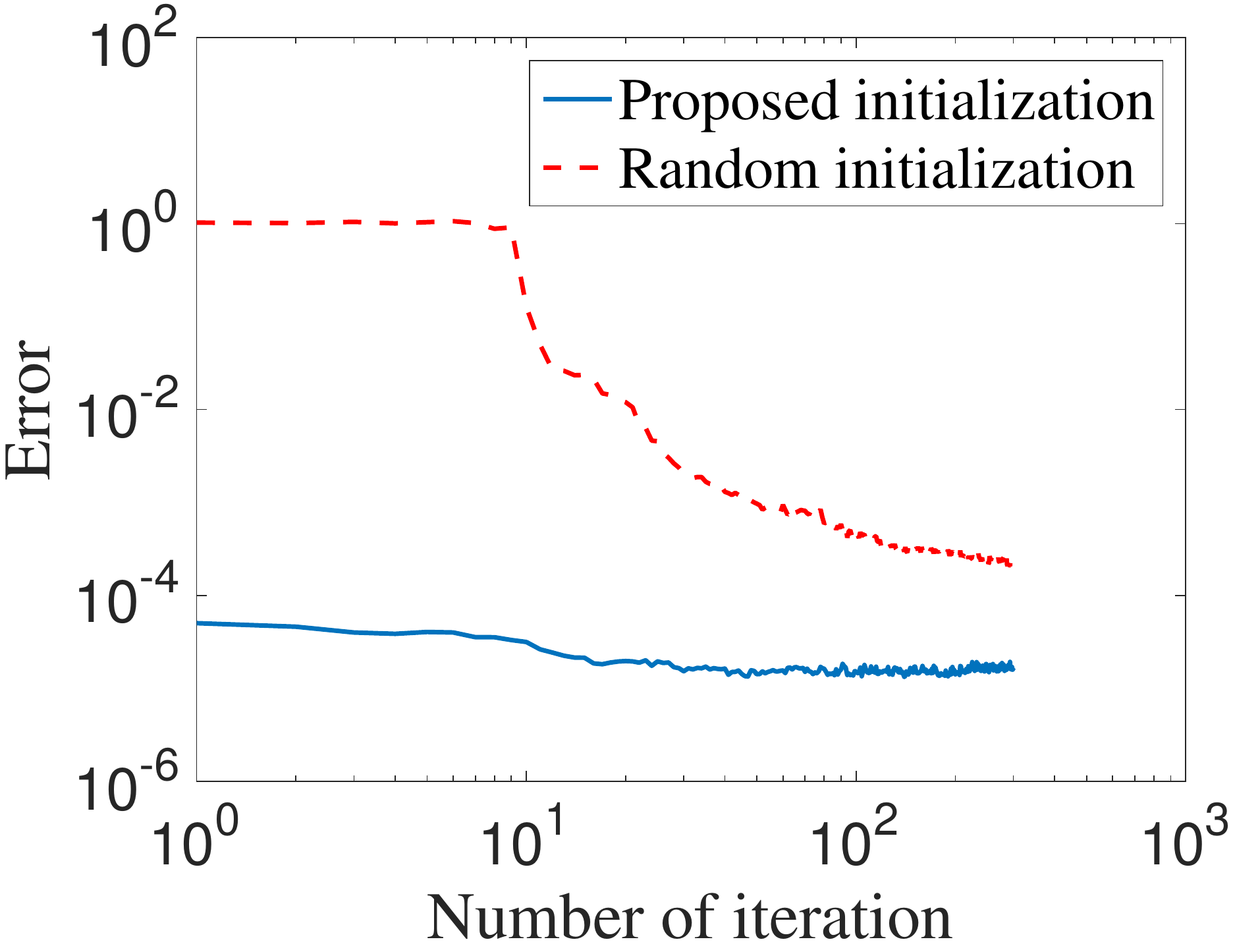}
  \end{center}
  \vspace{-1em}
  \caption{Plot of convergence of the ALS using both random and the proposed initializations for the
    numerical example given in Section \ref{section:PDE} with $n=3,d=12$. The error measure is
    defined in \eqref{error metric}.}\label{figure:conv}
\end{figure}

\begin{algorithm}[H]
  \caption{}\label{alg:initialization}
  \begin{algorithmic}[1]
	\Require 
	\Statex Function $f:[n]^d\rightarrow \mathbb{R}$.
	\Ensure
	\Statex $T^{k,L}\in\mathbb{R}^{n\times r},T^{k,C}\in \mathbb{R}^{r\times n\times r}, T^{k,R}\in\mathbb{R}^{r\times n}$, $k\in[d]$.
	\Statex \textbf{for} {$k=1$ to $k=d$}
	\State \hspace{\algorithmicindent}   \parbox[t]{\dimexpr\linewidth-\algorithmicindent}{Pick an arbitrary $z\in [n]^{d-3}$ and let
	  \begin{equation}
		\Omega_k^\text{ini} := \bigl\{x \in [n]^d \ \vert \ x_{[d]\setminus \{k-1,k,k+1\}} = z, x_{k-1},x_k,x_{k+1} \in [n] \bigr\}.
	  \end{equation}
	  Define
	  \begin{equation}
		T^k := f(\Omega_k^\text{ini} ) \in \mathbb{R}^{n\times n \times n}
	  \end{equation}
	  where the first, second and third dimensions of $T^k$ correspond to the $(k-1),k,(k+1)$-th
      dimensions of $f$. Note that we only pick one $z$ in $\Omega^{\text{envi}}_k$, which is the
      key that we can use SVD procedure in the next step and avoid ambiguity in the
      initialization. The justification of such procedure can be found in
      Appendix~\ref{section:Motivation}.}
    \State \hspace{\algorithmicindent} \parbox[t]{\dimexpr\linewidth-\algorithmicindent}{Now we want
      to factorize the 3-tensor $T^k$ into a tensor train with three nodes using SVD. First treat
      $T^k$ as a matrix by treating the first leg as rows and the second and third legs as
      columns. Apply a rank-$r$ approximation to $T^k$ using SVD:
	  \begin{equation}
		T^k_{1;2,3} \approx U_L \Sigma_L V_L^T.
	  \end{equation}
	  Let $C^{k}\in \mathbb{R}^{r\times n\times n}$ be reshaped from $\Sigma_L V_L^T \in
      \mathbb{R}^{r\times n^2}$.}
    \State \hspace{\algorithmicindent} \parbox[t]{\dimexpr\linewidth-\algorithmicindent}{Treat
      $C^{k}$ as a matrix by treating the first and second legs as rows and third leg as
      columns. Apply SVD to obtain a rank-$r$ approximation:
	  \begin{equation}
		C^k_{1,2;3} \approx U_R \Sigma_R V_R^T.
	  \end{equation}
	  Let $\tilde T^{k,C} \in \mathbb{R}^{r\times n \times r}$ be reshaped from $U_R\Sigma_R\in
      \mathbb{R}^{rn \times r}$.}
    \State \hspace{\algorithmicindent} \parbox[t]{\dimexpr\linewidth-\algorithmicindent}{Let
      $T^{k,L} := U_L \Sigma_L^{1/2} $ and $T^{k,R} := \Sigma_R^{1/2} V_R^T$.  Let $T^{k,C}$ be
      defined by
	  \begin{center}
		\includegraphics[width = 0.8\columnwidth]{./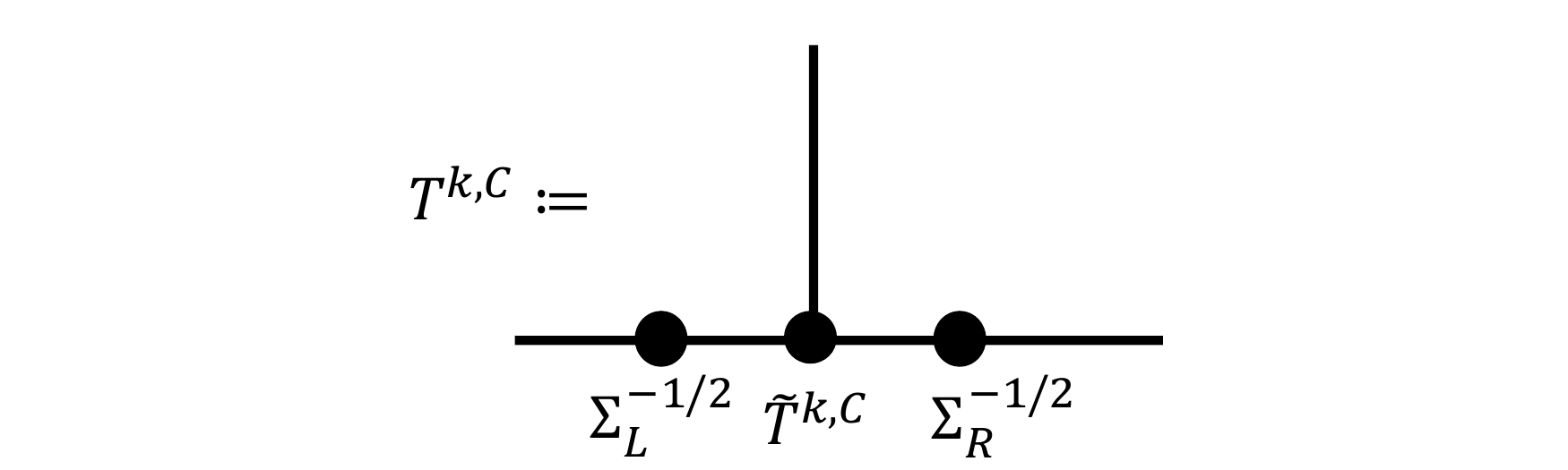}
	  \end{center}
      3-tensor $T^k$ is thus approximated by a tensor train with three tensors
      $T^{k,L}\in\mathbb{R}^{n\times r},T^{k,C}\in \mathbb{R}^{r\times n\times r},
      T^{k,R}\in\mathbb{R}^{r\times n}$.}  \Statex \textbf{end for}
  \end{algorithmic}
\end{algorithm}

\begin{algorithm}[H]
  \caption{}\label{alg:gauge}
  \begin{algorithmic}[1]
	\Require 
	\Statex Function $f:[n]^d\rightarrow \mathbb{R}$, $T^{k,L}, T^{k,C}, T^{k,R}$ for $k\in [d]$ from Algorithm~\ref{alg:initialization}.
	\Ensure
	\Statex Initialization $H^k, k\in[d]$.
	\Statex \textbf{for} {$k=1$ to $k=d$}
	\State
    \hspace{\algorithmicindent} \parbox[t]{\dimexpr\linewidth-\algorithmicindent}{Pick an arbitrary
      $z \in [n]^{d-4}$ and let \begin{equation} \Omega_k^\text{gauge} := \bigl\{x \in [n]^d \ \vert
        \ x_{[d]\setminus \{k-1,k,k+1,k+2\}} = z,\,  \forall\,x_{k-1},x_k,x_{k+1} ,x_{k+2}\in [n]
        \bigr\}
	  \end{equation}
	  and sample
	  \begin{equation}
		S^k = f(\Omega_k^\text{gauge}) \in \mathbb{R}^{n\times n\times n\times n}.
	\end{equation}}
	\State \hspace{\algorithmicindent} \parbox[t]{\dimexpr\linewidth-\algorithmicindent}{Solve
      the least-squares problem
	  \begin{equation}
		G^k = \underset{G}{\text{argmin}}\ \| L^k_{1,2;3} G R^k_{1; 2, 3} - S^k_{1,2;3,4} \|_F^2
	  \end{equation}
	  where $L^k$ and $R^k$ are defined as
	  \begin{center}
		\includegraphics[width = 0.6\columnwidth,trim = 6cm 6cm 4cm 5cm,clip]{./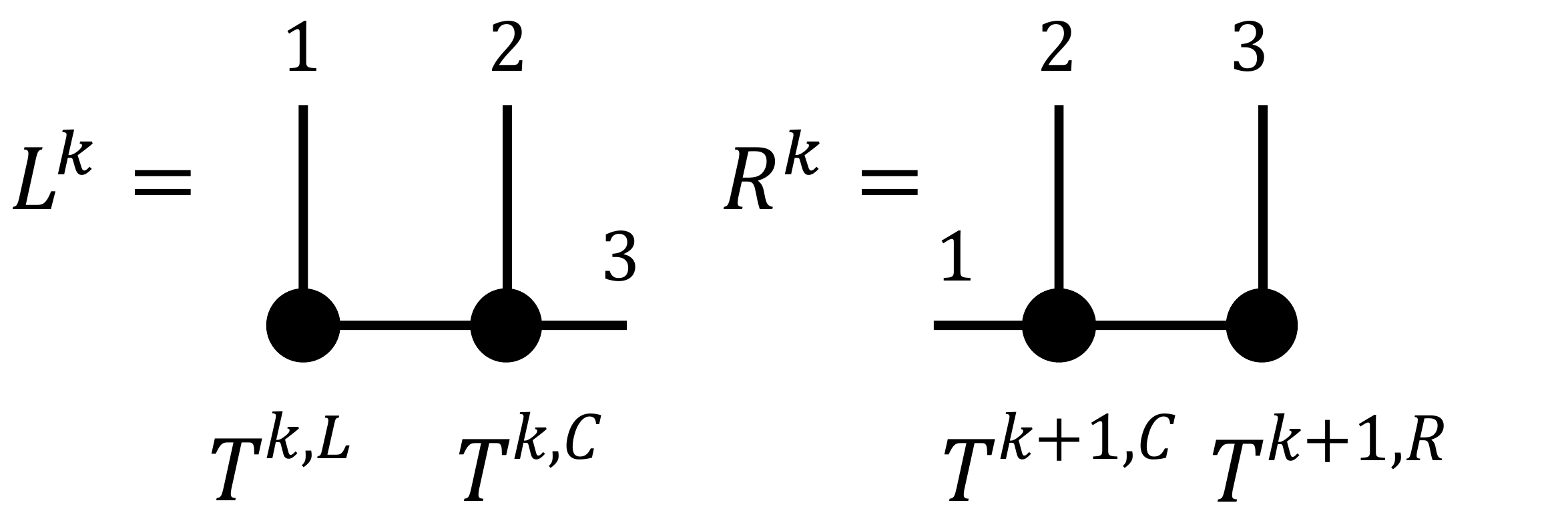}
	  \end{center}
    }
	\State \hspace{\algorithmicindent}   \parbox[t]{\dimexpr\linewidth-\algorithmicindent}{Obtain $H^{k}$:
	  \begin{center}
		\includegraphics[width = 0.4\columnwidth,trim = 9cm 6cm 12cm 7cm,clip]{./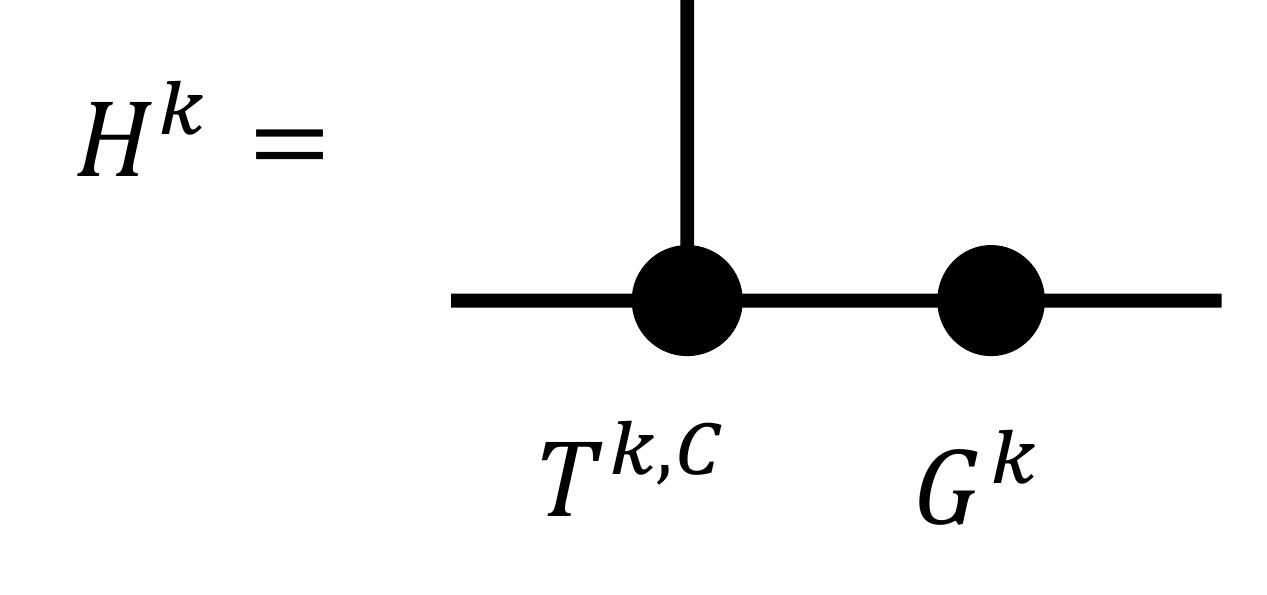}
	  \end{center}
    }
	\Statex \textbf{end for}
  \end{algorithmic}
\end{algorithm}

\subsection{Alternating least-squares}\label{section:ALS}
After constructing $\Omega_k$ and initializing $H^k$, $k\in[d]$, we start ALS by solving problem
(\ref{k-th least squares indep}) at each iteration. This completes Algorithm~\ref{alg: algorithm1}.
	
When running ALS, sometimes we want to increase the TR-rank to obtain a higher accuracy
approximation to the function $f$. In this case, we simply add a row and column of random entries to
each $H^k$, i.e.
\begin{equation}
  H^k(:, i, :) \leftarrow \begin{bmatrix} H^k(:, i, :) & \epsilon_1^{i,k} \\
    \epsilon_2^{i,k} &1 \end{bmatrix},\quad i=1,\ldots,n,\ k=1,\ldots,d,
\end{equation}
where each entry of $\epsilon_1^{i,k}\in \mathbb{R}^{r\times 1},\epsilon_2^{i,k}\in
\mathbb{R}^{1\times r}$ is sampled from Gaussian distribution, and continue with the ALS procedure
with the new $H^k$'s until the error stops decreasing. The variance of each Gaussian random variable
is typically set to $10^{-8}$.


\section{Motivation of the initialization procedure} \label{section:Motivation}
In this section, we motivate our initialization procedure in Alg.  \ref{alg:initialization}. The main idea is by fixing a random index set, a portion of the ring can be singled out and extracted.  To this end, we place the following assumption on the TR $f$.

\begin{assumption}\label{assumption:CI}
	Let the TR $f$ be partitioned into four disjoint regions (Fig \ref{figure:partition}): Regions
	$a$, $b$, $c_1$ and $c_2$ where $a,b,c_1,c_2 \subset [d]$. Regions $a,b,c_1,c_2$ contain $L_a,
	L_b, L_{c_1}, L_{c_2}$ number of dimensions respectively where $L_a+
	L_b+ L_{c_1}+ L_{c_2} = d$.  If $L_{a}, L_{b} \geq L_\text{buffer}$,
	for any $z\in [n]^{L_a+L_b}$, the TR $f$ satisfies
	\begin{equation}
	f(x_{c_1}, x_{a\cup b} ,x_{c_2})\vert_{x_{a\cup b}=z} \propto g(x_{c_1} , x_{a\cup b} )\vert_{x_{a\cup b}=z}  h(x_{a\cup b}, x_{c_2})\vert_{x_{a\cup b}=z} 
	\end{equation}
	for some functions $g,h$. Here ``$\propto$'' denotes the proportional up to a constant relationship.
\end{assumption}
\begin{figure}[!ht]
	\begin{center}
		\includegraphics[width = 0.8\columnwidth,trim = 1cm 1cm 1cm 1cm,clip]{./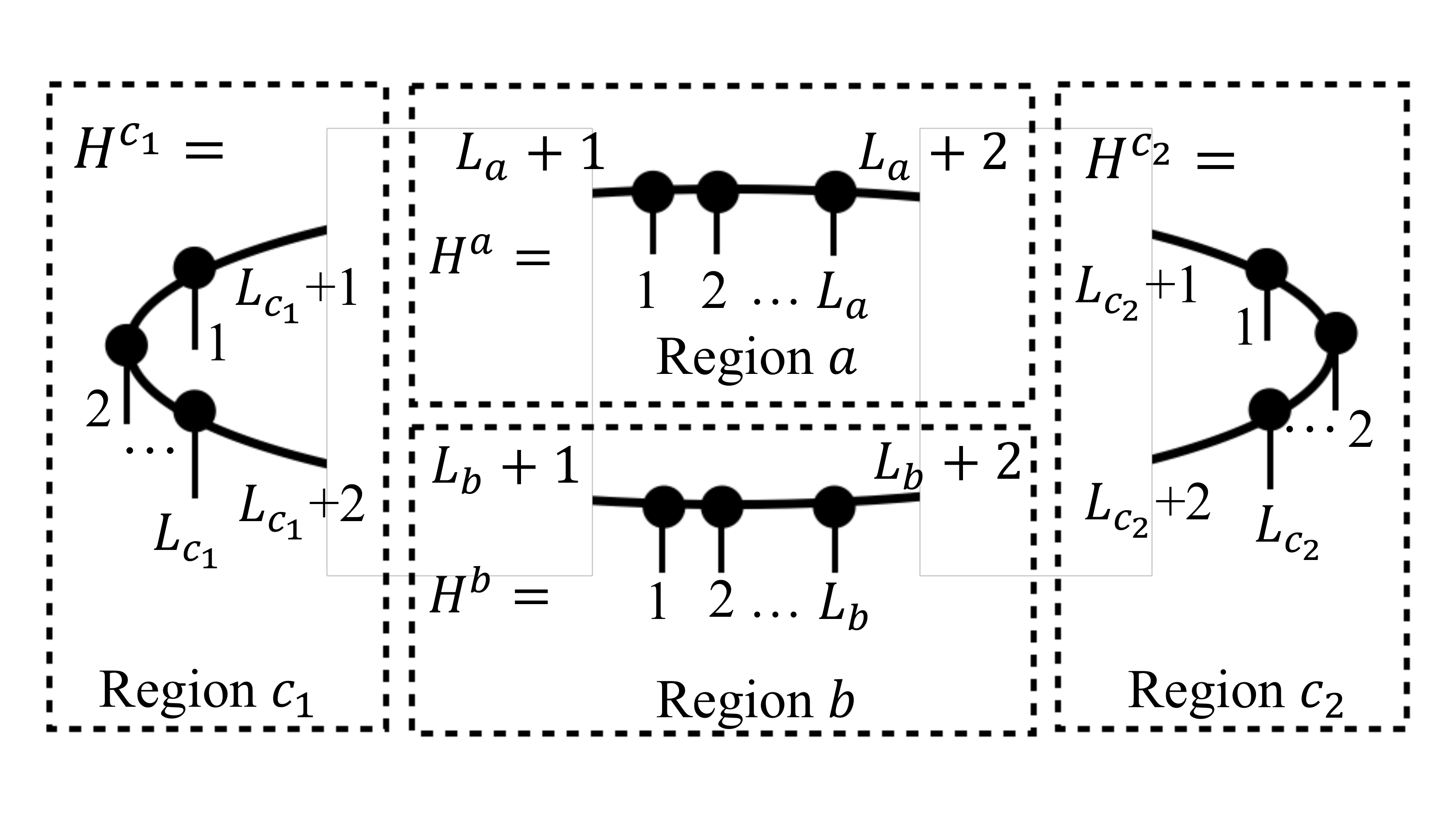}
	\end{center}
	\caption{Figure of TR $f$ partitioned into region $a,b,c_1,c_2$. }\label{figure:partition}
\end{figure}
We note that Assumption \ref{assumption:CI} holds if $f$ is a non-negative function and admits a
Markovian structure. Such functions can arise from Gibbs distribution with energy defined by
short-range interactions \cite{Wolfetal:08}, for example the Ising model.

Next we make certain non-degeneracy assumption on the TR $f$.
\begin{assumption}
	\label{assumption:injectivity}
	Any segment $H$ of the TR $f$ (for example $H^a,H^b, H^{c_1},H^{c_2}$ shown in Figure~\ref{figure:fullrank}), satisfies
	\begin{equation}
	\rank(H_{L+1,L+2; [L]}) = r^2
	\end{equation}
	if $L\geq L_0$ for some natural number $L_0$. In particular, if $L\geq L_0$, we assume the
	condition number of $H_{ [L];L+1,L+2}\geq \kappa$ for some $\kappa = 1+\delta \kappa$, where
	$\delta \kappa\geq 0$ is a small parameter.
	\begin{figure}[!ht]
		\begin{center}
			\includegraphics[width = 0.4\columnwidth,trim = 7cm 6cm 6cm 7cm,clip]{./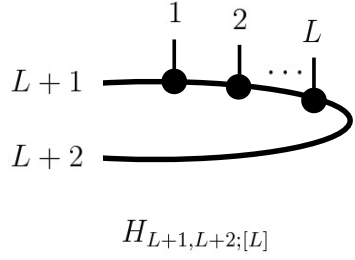}
		\end{center}
		\caption{Figure of a segment of TR, denoted as $H$, with $L+2$ dimensions. The $1,\ldots,L$-th
			dimensions have size $n$, corresponding to outgoing legs of the TR, and the $L+1,L+2$-th
			dimension are the latent dimensions with size $r$. }\label{figure:fullrank}
	\end{figure}
\end{assumption}
Since $H_{L+1,L+2; [L]}\in \mathbb{R}^{r^2 \times n^L}$, it is natural to expect when $n^L \geq
r^2$, $H_{L+1,L+2; [L]}$ is rank $r^2$ generically \cite{perez2006matrix}.

We now state a proposition that leads us to the intuition behind designing the initialization
procedure Algorithm~\ref{alg:initialization}.
\begin{proposition}\label{proposition:disentangle}
	Let
	\begin{equation}
	s^1 = e_{i_1}\otimes e_{i_2} \otimes \cdots \otimes e_{i_{L_{a}}}, \quad s^2 = e_{j_1}\otimes e_{j_2} \otimes \cdots \otimes e_{j_{L_{b}}}
	\end{equation}
	be any two arbitrary sampling vectors where $\{e_k\}_{k=1}^n$ is the canonical basis in
	$\mathbb{R}^n$. If $L_a,L_b,L_{c_1},L_{c_2}\geq \max(L_0,L_\text{buffer})$, the two matrices $B^1,
	B^2\in \mathbb{R}^{r\times r}$ defined in Figure~\ref{figure:B12} are rank-1.
	\begin{figure}[!ht]
		\begin{center}
			\includegraphics[width = 0.8\columnwidth,trim = 0cm 1cm 1cm 1cm,clip]{./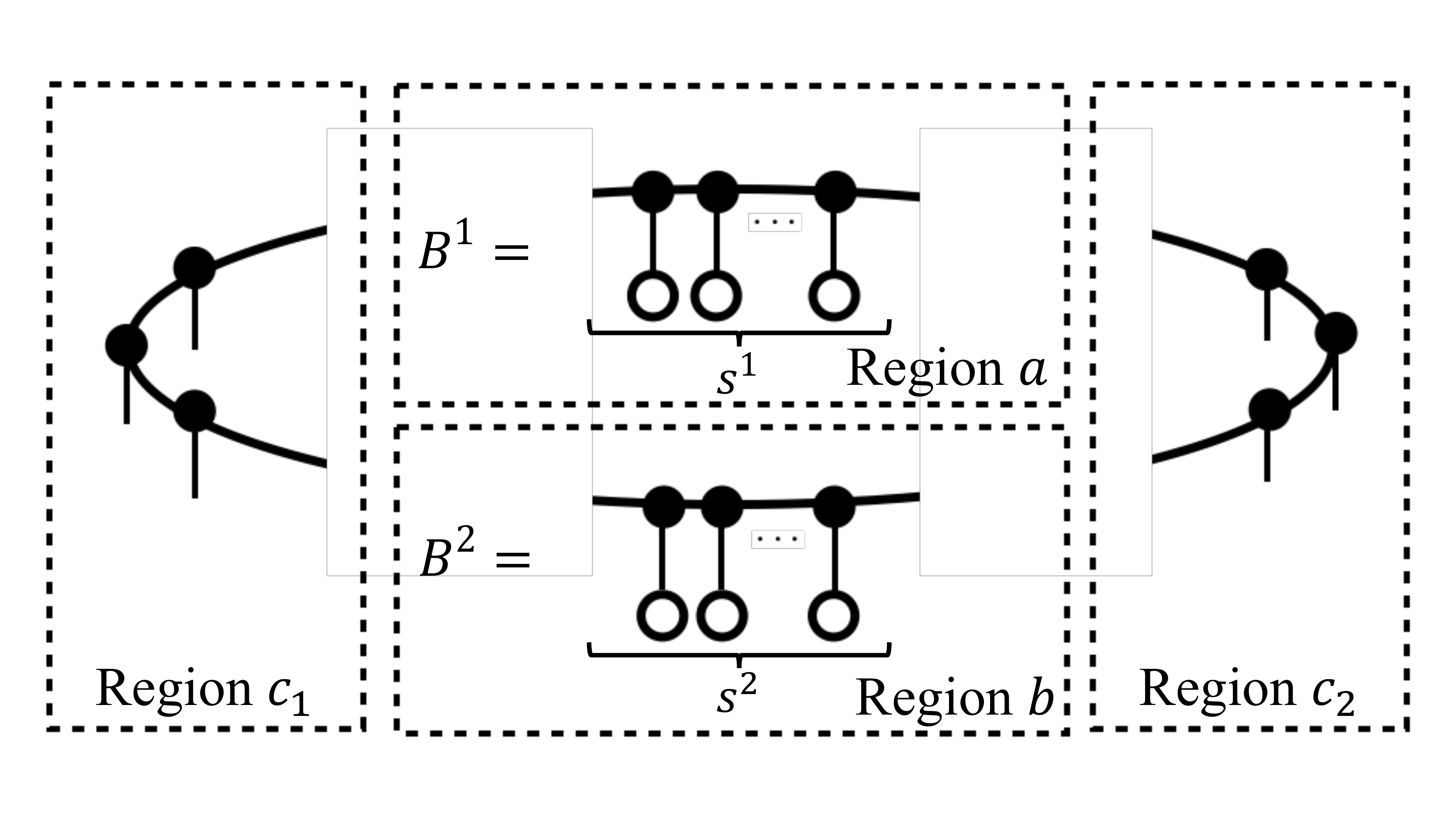}
		\end{center}
		\caption{Definition of the matrices $B^1, B^2$ in Proposition \ref{proposition:disentangle}. }\label{figure:B12}
	\end{figure}
\end{proposition}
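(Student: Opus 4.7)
The plan is to transfer the rank-one property of the \emph{physical} matrix (guaranteed by Assumption~\ref{assumption:CI}) into rank-one-ness of the two virtual $r\times r$ matrices $B^1,B^2$ by using the full-rank property of Assumption~\ref{assumption:injectivity}. First I would write down explicitly the function obtained from the tensor ring after inserting the canonical-basis sampling vectors $s^1,s^2$ into segments $a,b$. The cyclic arrangement forced by Assumption~\ref{assumption:CI} (regions $a$ and $b$ together have to separate $c_1$ from $c_2$ in the ring) is $c_1\to a\to c_2\to b\to c_1$, so after the insertions the resulting function in $(x_{c_1},x_{c_2})$ takes the form
\begin{equation*}
F(x_{c_1},x_{c_2})=\Tr\bigl(H^{c_1}(x_{c_1})\,B^1\,H^{c_2}(x_{c_2})\,B^2\bigr),
\end{equation*}
where $H^{c_i}(x_{c_i})$ is the product of the core matrices along segment $c_i$ at physical indices $x_{c_i}$, and $B^1,B^2\in\mathbb{R}^{r\times r}$ are exactly the matrices of Figure~\ref{figure:B12}, obtained by contracting segments $a,b$ against $s^1,s^2$.

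Next I would matricize. Let $\widetilde{H}^{c_i}\in\mathbb{R}^{n^{L_{c_i}}\times r^2}$ denote the reshape of segment $c_i$ with physical indices as rows and the two virtual bonds grouped as columns. Labeling the four virtual bonds as $\alpha$ between $c_1$--$a$, $\beta$ between $a$--$c_2$, $\gamma$ between $c_2$--$b$, and $\delta$ between $b$--$c_1$, the formula above can be rewritten as $F=\widetilde{H}^{c_1}K(\widetilde{H}^{c_2})^{\top}$, where $K\in\mathbb{R}^{r^2\times r^2}$ has entries
\begin{equation*}
K_{(\delta,\alpha),(\beta,\gamma)}=B^1(\alpha,\beta)\,B^2(\gamma,\delta).
\end{equation*}
By Assumption~\ref{assumption:CI}, $F$ is proportional to the outer product $g(x_{c_1})\,h(x_{c_2})$, so $\rank(F)\leq 1$. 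By Assumption~\ref{assumption:injectivity} applied to segments $c_1$ and $c_2$ (using $L_{c_1},L_{c_2}\geq L_0$), both $\widetilde{H}^{c_1}$ and $\widetilde{H}^{c_2}$ have full column rank $r^2$. Consequently $\rank(K)=\rank(F)\leq 1$.

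Finally I would extract rank-one-ness of $B^1$ and $B^2$ individually from $\rank(K)\leq 1$. Assuming neither $B^1$ nor $B^2$ is identically zero (the degenerate case being trivially rank-zero), pick $(\gamma_0,\delta_0)$ with $B^2(\gamma_0,\delta_0)\neq 0$. The identity
\begin{equation*}
B^1(\alpha,\beta)=\frac{K_{(\delta_0,\alpha),(\beta,\gamma_0)}}{B^2(\gamma_0,\delta_0)}
\end{equation*}
exhibits $B^1$ as a slice of the rank-one matrix $K$, hence itself a rank-one outer product in $(\alpha,\beta)$. Symmetrically, fixing a nonzero entry of $B^1$ shows $B^2$ is rank-one in $(\gamma,\delta)$.

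The main obstacle I anticipate is the bookkeeping in the matricization step: the four distinct virtual bonds of the ring must be routed through the two $r\times r$ matrices $B^1,B^2$ so that they pair correctly with the left/right bond legs of $\widetilde{H}^{c_1}$ and $\widetilde{H}^{c_2}$, and the pairing $K_{(\delta,\alpha),(\beta,\gamma)}=B^1(\alpha,\beta)B^2(\gamma,\delta)$ must match the conventions of Figure~\ref{figure:B12}. Once this wiring is correct, the argument reduces to a clean rank computation using the two standing assumptions.
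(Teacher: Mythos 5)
Your proposal is correct and follows essentially the same route as the paper's proof: both use the full column rank $r^2$ of the matricized segments $c_1,c_2$ (Assumption~\ref{assumption:injectivity}) to transfer the rank-one property of the sampled physical matrix (Assumption~\ref{assumption:CI}) onto the middle factor, which is exactly $B^1\otimes B^2$ up to an index permutation. The only cosmetic difference is the last step, where the paper invokes $\rank(B^1)\rank(B^2)=\rank(B^1\otimes B^2)=1$ while you extract $B^1$ (resp.\ $B^2$) as a scaled slice of the rank-one matrix $K$; these are equivalent, and your explicit bookkeeping of the four virtual bonds matches the wiring in Figure~\ref{figure:B12}.
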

\begin{proof}
	Due to Assumption \ref{assumption:injectivity}, $H^{c_1}_{L_{c_1}+1,L_{c_1}+2;[L_{c_1}]} \in
	\mathbb{R}^{r^2\times n^{L_{c_1}}}$ and $H^{c_2}_{L_{c_2}+1,L_{c_2}+2;[L_{c_2}]}$ $\in
	\mathbb{R}^{r^2\times n^{L_{c_2}}}$ defined in Figure~\ref{figure:B12} are rank-$r^2$. Along with
	the implication of Assumption \ref{assumption:CI} that
	\begin{equation}
	\rank\bigl(\bigl(H^{c_1}_{L_{c_1}+1,L_{c_1}+2;[L_{c_1}]}\bigr)^T B^1 \otimes B^2{H^{c_2}_{L_{c_2}+1,L_{c_2}+2;[L_{c_2}]}}\bigr) = 1,
	\end{equation}
	we get 
	\begin{equation}
	\label{kron rank 1}
	\rank(B^1 \otimes B^2)=1.
	\end{equation}
	Since $\rank(B^1)\rank(B^2) = \rank(B^1\otimes B^2)=1$, it follows that the rank of $B^1$, $B^2$ are 1. 
\end{proof}




The conclusion of Proposition \ref{proposition:disentangle} implies that to obtain the segment of TR
in region $a$, one simply needs to apply some sampling vector $s^2$ in the canonical basis to region
$b$ to obtain the configuration in Figure~\ref{figure:openring} where the vectors $p^b,q^b \in
\mathbb{R}^{r}$. Our goal is to extract the nodes in region $a$ as $H^k$'s. It is intuitively
obvious that one can apply the TT-SVD technique in \cite{oseledets2010tt} to extract them. Such
technique is indeed used in the proposed initialization procedure where we assume $L_\text{buffer} =
1, L_0 = 1, L_a = 1, L_b = d-3$. For completeness, in Proposition \ref{proposition:stability} in the appendix, we
formalize the fact that one can use TT-SVD to learn each individual 3-tensor in the TR $f$ up to
some gauges. We further provide a perturbation analysis for the case when Markovian-type assumption
holds only approximately in Proposition \ref{proposition:stability}.

\begin{figure}[!ht]
	\begin{center}
		\includegraphics[width = 0.6\columnwidth,trim = 0cm 2cm 0cm 3cm,clip]{./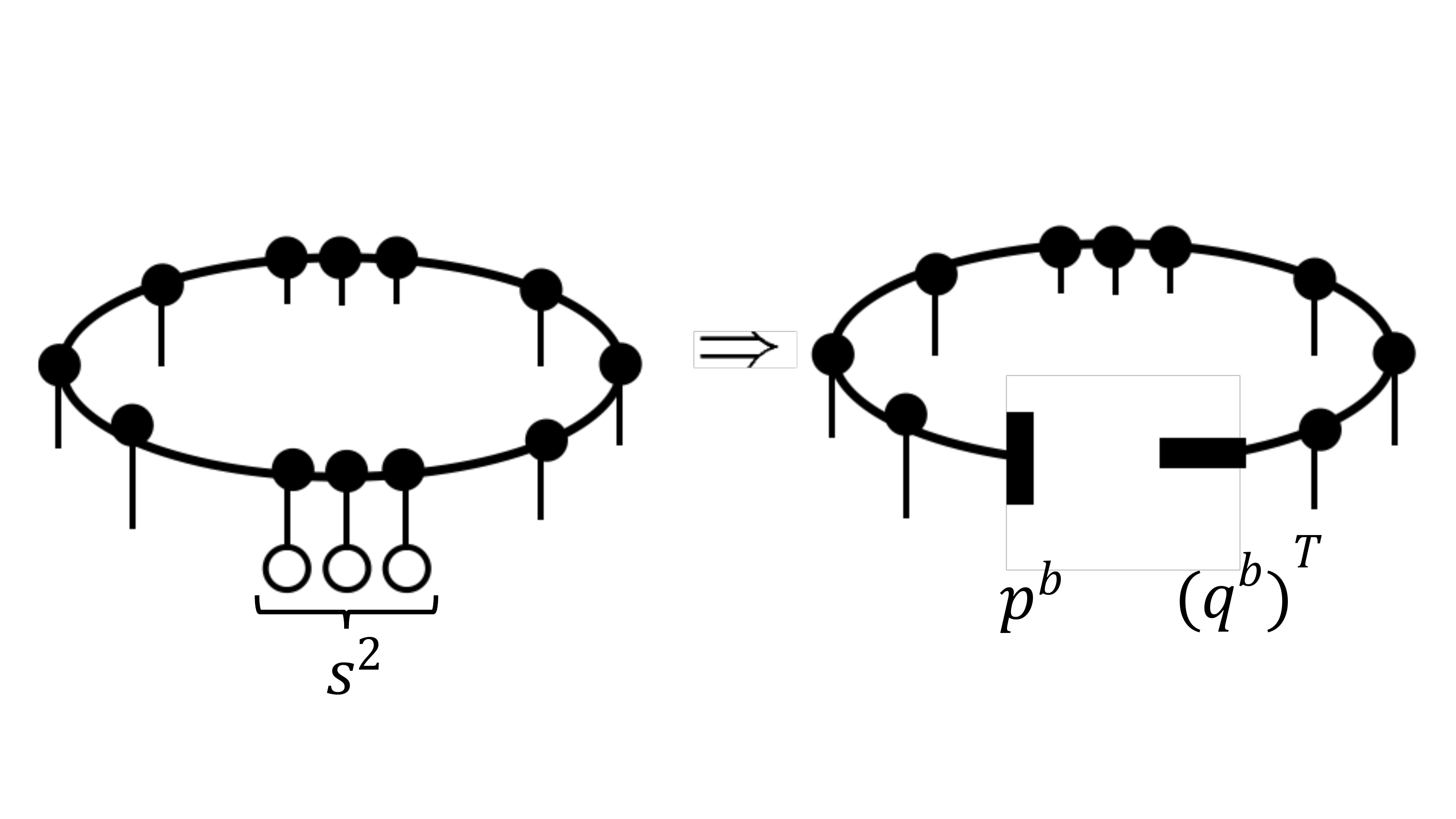}
	\end{center}
	\caption{Applying a sampling vector $s^2$ in the canonical basis to region $b$ gives the TT. }\label{figure:openring}
\end{figure}

\section{Numerical results}\label{section:numerical}
In this section, we present numerical results on the proposed method for tensor ring
decomposition. We calculate the error between the obtained tensor ring
decomposition and function $f$ as:
\begin{equation}
  \label{error metric}
  E = \sqrt{\frac{\sum_{x\in \Omega}  \big( \Tr(H^1[x_1]\cdots H^d[x_d])- f(x_1,\ldots,x_d)\big)^2}{\sum_{x\in \Omega} f(x_1,\ldots,x_d)^2}}.
\end{equation}  
Whenever it is feasible, we let $\Omega = [n]^d$. Otherwise, we subsample $\Omega$ from $[n]^d$ at random: For every $x\in\Omega$, $x_i$ is drawn from $[n]$ uniformly at random. If the dimensionality of $f$ is large, we simply sample $\Omega$ from $[n]^d$ at random. For the proposed algorithm, we also measure the error on the entries sampled for learning TR as:
\begin{equation}
  E_\text{skeleton} = \sqrt{\frac{\sum_{x\in \cup_k \Omega_k}  \big( \Tr(H^1[x_1]\cdots H^d[x_d])- f(x_1,\ldots,x_d)\big)^2}{\sum_{x\in\cup_k \Omega_k} f(x_1,\ldots,x_d)^2}}.
\end{equation} 

In the experiments, we compare our method, denoted as TR-ALS+, with TR-ALS proposed in \cite{wang2017efficient}. In  \cite{wang2017efficient}, the cost in \eqref{full variational} is minimized using ALS where \eqref{k-th least squares} is solved for each $k$ in an alternating fashion. Although \cite{wang2017efficient} proposed an SVD based initialization approach similar to the recursive SVD algorithm for TT \cite{oseledets2010tt}, this method has exponential complexity in $d$. Therefore the comparison with such an initialization is omitted and we use randomized intialization for TR-ALS. As we shall see, TR-ALS+ is generally an order of magnitude faster than TR-ALS, due to the special structure of the samples. For each experiment we run both TR-ALS and TR-ALS+  for five times and report the median accuracy. For TR-ALS, we often have to use less samples such that the running time is not excessively long (recall that TR-ALS has $O(d^2)$ complexity per iteration). We also compare ourselves with the DMRG-Cross algorithm \cite{savostyanov2011fast} (which gives a TT). As a method that is based on interpolative decomposition, DMRG-Cross is able to obtain high quality approximation if we allow a large TT-rank representation. Since we obtain the TR based on ALS optimization, the accuracy may not be comparable to DMRG-Cross. What we want to emphasize here is that if the given situation only requires moderate accuracy, our method could give a more economical representation than TT obtained from DMRG-Cross. To convey this message, we set the accuracy of DMRG-Cross so that it matches the accuracy of our proposed TR-ALS. 

\subsection{Example 1: A toy example}
We first compress the function 
\begin{equation}
  f(x_1,\ldots,x_d) = \frac{1}{\sqrt{1+x_1^2+\ldots+x_d^2}},\quad x_k\in [0,1]
\end{equation}
considered in \cite{espig2012note} into a tensor ring.  In this example, we let $s=4$ (recall that $s$ is the size of $\Omega_k^\text{envi}$) in TR-ALS+. The number of samples we can afford to use for TR-ALS is less than TR-ALS+ due to the excessively long running time since each iteration of TR-ALS has a complexity scaling of $O(d^2)$. In this example, although sometimes TR-ALS+ has lower accuracy than TR-ALS, the running time of TR-ALS+ is significantly shorter. In particular, for the case when $d=12$, TR-ALS fails to converge using the same amount of samples as TR-ALS+. Both TR-ALS+ and TR-ALS give TR with tensor components with smaller sizes than TT. The error $E$ reported for the case of $d=12$ is obtained from sampling $10^5$ entires of the tensor $f$. 

\begin{table}[H]
  \centering 
  \scalebox{0.9}{
	\begin{tabular}{c c c c c c c} 
	  \hline\hline 
	  Setting & Format & {\begin{tabular}[c]{@{}l@{}}Rank \\ $(r_1,\ldots,r_d)$\end{tabular}}  & $E_\text{skeleton}$ & $E$ & $ \frac{\text{Number of observations}}{n^d}$ & Run Time (s)\\ 
	  \hline\hline 
	  $d=6, n=10$ & TR-ALS+  & (3,3,3,3,3,3)  & 2.3e-03 & 6.3e-04  & 1.8e-01 & 4.7\\
	  & TR-ALS & (3,3,3,3,3,3)  & 4.3e-05 & 4.5e-05 &2.8e-02 & 1360\\
	  & TT  &  (5,5,5,5,5,1) &    -     &    1.2e-04  & - &  2.4 \\
	  \hline
	  $d=6, n=20$ & TR-ALS+  & (3,3,3,3,3,3)  & 5.1e-04 & 9.4e-05 & 2.1e-02 & 24 \\
	  & TR-ALS & (3,3,3,3,3,3)  & 5.0e-05 & 5.4e-05  & 8.2e-04 & 2757\\
	  & TT  & (5,5,6,5,5,1)  &  -       & 6.8e-05    & - & 7.1  \\
	  \hline
	  $d=12, n=5$ & TR-ALS+  & \begin{tabular}{@{}c@{}}(3,3,3,3,3,3 \\ \ \ 3,3,3,3,3,3)\end{tabular}  &  7.1e-04 & 5.9e-04 & 1.7e-04  & 28\\
				& TT-ALS  &  \begin{tabular}{@{}c@{}}(3,3,3,3,3,3 \\ \ \ 3,3,3,3,3,3)\end{tabular} &   0.97   & 0.97  &1.7e-04 & 3132  \\
				& TT  &  \begin{tabular}{@{}c@{}}(5,6,6,6,6,6 \\ \ \ 6,6,5,5,5,1) \end{tabular}   &     -    & 2.2e-05    & - & 2.9  \\
	  \hline	
  \end{tabular}} \\ [1ex]
  \caption{Results for Example 1. $n$ corresponds to the number of uniform grid points on $[0,1]$ for each
  	$x_k$. The tuple $(r_1,\ldots,r_d)$ indicates the rank of the learnt
    TR and TT. $E_\text{skeleton}$  is computed on the samples used for learning the TR. }\label{table: results1}
\end{table}

\subsection{Example 2: Ising spin glass}
In this example, we demosntrate the advantage of TR-ALS+ in compressing high-dimensional function arising from many-body physics, the traditional field where TT or MPS is extensively used \cite{AKLT:88,white1992density}. We consider compressing the free energy of Ising spin glass with a ring geometry:
\begin{equation}
  f(J_1,\ldots,J_d) = -\frac{1}{\beta}\log\bigg[\Tr\bigg(\prod_{i=1}^d \begin{bmatrix} e^{\beta J_i} & e^{-\beta J_i}\\ e^{-\beta J_i} & e^{\beta J_i}\end{bmatrix}  \bigg)\bigg].
\end{equation}
We let $\beta = 10$, and $J_i \in \{-2.5,-1.5,1,2\}, i\in[d]$. This corresponds to Ising model with
temperature of about 0.1K. We let the number of environment samples $s=5$. When computing the error $E$ for the case of $d=24$, due to the size of $f$, we simply sub-sample $10^5$ entries of $f$ where $J_i$'s are sampled independently and uniformly from $\{-2.5,-1.5,1,2\}$. For $d=12$, the solution obtained by TR-ALS+ is superior due to the initialization procedure. We see that in both $d=12,24$ cases, the running time of TR-ALS is much longer compare to TR-ALS+.
	
\begin{table}[H]
  \centering 
  \scalebox{0.9}{
	\begin{tabular}{c c c c c c c} 
	  \hline\hline 
	  Setting & Format & {\begin{tabular}[c]{@{}l@{}}Rank \\ $(r_1,\ldots,r_d)$\end{tabular}}  & $E_\text{skeleton}$ &$E$& $ \frac{\text{Number of observations}}{n^d}$& Run Time (s)\\ 
	  \hline\hline 
	  $d=12, n=4$ & TR-ALS+  & \begin{tabular}{@{}c@{}}(4,4,4,4,4,4 \\ \ \ 4,4,4,4,4,4)\end{tabular}  &  3.9e-03 & 3.8e-03 & 1.6e-02 & 7 \\
	  & TR-ALS  & \begin{tabular}{@{}c@{}}(4,4,4,4,4,4 \\ \ \ 4,4,4,4,4,4)\end{tabular}  &  4.4e-02 & 5.2e-02 & 1.6e-02 & 994 \\
	  & TT  &  \begin{tabular}{@{}c@{}}(6,7,7,7,7,7 \\ \ \ 7,7,7,6,4,1)\end{tabular} &    -     &  4.2e-03  & - &  2.8  \\
	  \hline	
	  $d=24, n=4$ & TR-ALS+  & \begin{tabular}{@{}c@{}}(3,3,3,3,3,3 \\ \ 3,3,3,3,3,3 \\ \  3,3,3,3,3,3 \\ \ \ 3,3,3,3,3,3) \end{tabular}  & 4.8e-03 & 2.7e-03 & 1.6e-10& 19 \\
	& TR -ALS & -  &  - & - & 1.6e-10& - \\
	  & TT  &  \begin{tabular}{@{}c@{}}(6,8,8,8,6,6 \\ \ 6,6,6,6,7,6 \\ \ 5,6,6,6,6,7 \\ \ \ 7,6,6,6,4,1) \end{tabular} &    -     &  3.7e-03  &  - & 9.3  \\
	  \hline
		\end{tabular}} \\ [1ex]
  \caption{Results for Example 2. Learning the free energy of Ising spin glass. }\label{table: results2}
\end{table}

\subsection{Example 3: Parametric elliptic partial differential equation (PDE)}
\label{section:PDE}
In this section, we demonstrate the performance of our method in solving parametric PDE. We are
interested in solving elliptic equation with random coefficients
\begin{equation}
  \frac{\partial }{\partial x} a(x) \bigg( \frac{\partial }{\partial x} u(x) + 1\bigg) = 0, \quad x\in [0,1]
\end{equation}
subject to periodic boundary condition, where $a(\cdot)$ is a random field. In particular, we
want to parameterize the effective conductance function
\begin{equation}
  A_\text{eff}(a(\cdot)) := \int_{[0,1]} a(x) \bigg( \frac{\partial }{\partial x} u(x) + 1 \bigg)^2 dx
\end{equation}
as a TR. By discretizing the domain into $d$ segments and assuming $a(x) = \sum_{i=1}^d a_i
\chi_i(x)$, where each $a_i \in [1,2,3]$ and $\chi_i$'s being step functions on uniform intervals on
$[0,1]$, we determine $A_\text{eff}(a_1,\ldots,a_d)$ as a TR. In this case, the effective
coefficients have an analytic solution
\begin{equation}
  A_\text{eff}(a_1,\ldots,a_d) = \bigg(\frac{1}{d}\sum_{i=1}^d a_i \bigg)^{-1}
\end{equation}
and we use this formula to generate samples to learn the TR. For this example, we pick $s=4$. The
results are reported in Table \ref{table: results3}. When computing $E$ with $d=24$,
again $10^5$ entries of $f$ are subsampled where $a_i$'s are sampled independently and uniformly
from $\{1,2,3\}$. We note that although in this situation, there is an analytic formula for the
function we want to learn as a TR, we foresee further usages of our method when solving parametric
PDE with periodic boundary condition, where there is no analytic formula for the physical quantity
of interest (for example for the cases considered in \cite{khoo2017solving}).

\begin{table}[H]
  \centering 
  \scalebox{0.9}{
	\begin{tabular}{c c c c c c c} 
	  \hline\hline 
	  Setting & Format & {\begin{tabular}[c]{@{}l@{}}Rank \\ $(r_1,\ldots,r_d)$\end{tabular}}  & $E_\text{skeleton}$ & $E$ &$ \frac{\text{Number of observations}}{n^d}$ & Run Time (s)\\ 
	  \hline\hline 
	  $d=12, n=3$ & TR-ALS+ & \begin{tabular}{@{}c@{}}(3,3,3,3,3,3 \\ \ \ 3,3,3,3,3,3)\end{tabular}  &  1.1e-05 & 1.1e-05 &1.4e-02 & 22 \\
	  & TR-ALS  & \begin{tabular}{@{}c@{}}(3,3,3,3,3,3 \\ \ \ 3,3,3,3,3,3)\end{tabular}  &  5.7e-06 & 6.8e-06 & 1.4e-02 & 1414 \\
	  & TT  &  \begin{tabular}{@{}c@{}}(5,5,5,5,5,5 \\ \ \ 5,5,5,3,3,1)\end{tabular} &    -     &  2.5e-05  & - & 0.76  \\
	  \hline	
	  $d=24, n=3$ & TR-ALS+  & \begin{tabular}{@{}c@{}}(3,3,3,3,3,3 \\ \ 3,3,3,3,3,3 \\ \  3,3,3,3,3,3 \\ \ \ 3,3,3,3,3,3) \end{tabular}  &  2.6e-05 & 2.8e-05 &5.5e-06 &47 \\
	  & TR-ALS  & - &  - & -  & 5.5e-06  & - \\
	  & TT  &  \begin{tabular}{@{}c@{}}(5,5,5,5,5,5 \\ \ 5,5,5,5,5,5 \\ \ 5,5,5,5,5,5 \\ \ \ 5,5,5,3,3,1) \end{tabular} &    -     &  1.7e-05  &  - &1.5  \\
				\hline
  \end{tabular}} \\ [1ex]
  \caption{Results for Example 3. Solving parametric elliptic PDE.}\label{table: results3}
\end{table}

\section{Conclusion}\label{section:conclusion}
In this paper, we propose method for learning a TR representation based on ALS. Since the problem of
determining a TR is a non-convex optimization problem, we propose an initialization strategy that
helps the convergence of ALS. Furthermore, since using the entire tensor $f$ in the ALS is
infeasible, we propose an efficient hierarchical sampling method to identify the important
samples. Our method provides a more economical representation of the tensor $f$ than TT-format.  As
for future works, we plan to investigate the performance of the algorithms for quantum systems. One
difficulty is that the Assumption~\ref{assumption:CI} (Appendix~\ref{section:Motivation}) for the proposed initialization procedure does not in general hold for quantum systems
with short-range interactions. Instead, a natural assumption for a quantum state exhibiting a
tensor-ring format representation is the exponential correlation decay \cite{HastingsKoma:06,
  BrandaoHorodecki}. The design of efficient algorithms to determine the TR representation under such assumption is left for
future works. Another natural direction is to extend the proposed method to tensor networks in higher spatial dimension,
which we shall also explore in the future.
	
\appendix

\section{Stability of initialization}\label{section:stability}
In this section, we analyze the stability of the proposed
initialization procedure, where we relax Assumption
\ref{assumption:CI} to approximate Markovianity.
\begin{assumption}\label{assumption:CI2}
  Let
  \begin{equation}
	\Omega_z := \bigl\{(x_{c_1},x_{a\cup b}, x_{c_2})\mid x_{c_1} \in [n]^{L_{c_1}}, x_{c_2} \in [n]^{L_{c_2}}, x_{a\cup b} = z \bigr\}
  \end{equation}
  for some given $z\in [n]^{L_a+L_b}$. For any $z\in [n]^{L_a+L_b}$, we assume
  \begin{equation}
	\label{near rank1}
	\frac{\| f(\Omega_z)_{c_1;a\cup b\cup c_2} \|_2^2}{\| f(\Omega_z)_{c_1;a\cup b\cup c_2}\|_F^2} \geq \alpha.
  \end{equation} 
  for some $0< \alpha \leq 1$ if $L_{a},L_{b}\geq L_\text{buffer}$.
\end{assumption}
This assumption is a relaxation of Assumption 1. Indeed, if \eqref{near rank1} holds for $\alpha =
1$, it implies that $f(\Omega_z)_{c_1; a\cup b \cup c_2}$ is rank $1$. Under the
Assumption~\ref{assumption:CI2}, we want to show that using Algorithm~\ref{alg:initialization}, one can
extract $H^k$'s approximately. The final result is stated in Proposition
\ref{proposition:stability}, obtained via the next few lemmas. In particular, we show that when the condition number $\kappa$ of the tensor ring components (defined in Lemma \ref{lemma:rank-1 con}) satisfies $\kappa=1$, as $\alpha\rightarrow 1$, the approximation error goes to $0$. In the first lemma, we show that
$B^1,B^2$ defined in Figure~\ref{figure:B12} are approximately rank-1.

\begin{lemma}\label{lemma:rank-1 con}
  Let $H^{c_1}, H^{c_2}, B^1, B^2$ be defined according to Figure~\ref{figure:partition} and
  \ref{figure:B12}, where the sampling vectors $s^1, s^2$ are defined in Proposition
  \ref{proposition:disentangle}.  If $L_{c_1},L_{c_2},L_a,L_b \geq \max(L_0,L_\text{buffer})$, then
  \begin{equation}
	\label{approx rank-1}
		\frac{\| B^1\|_2^2} {\|B^1\|_F^2}, \frac{\| B^2\|_2^2} {\|B^2\|_F^2}\geq
        \frac{\alpha}{\kappa^4}.
  \end{equation}
\end{lemma}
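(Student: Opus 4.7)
The plan is to reuse the exact factorization already derived in the proof of Proposition~\ref{proposition:disentangle}. After fixing the canonical sampling vectors $s^1,s^2$ in regions $a$ and $b$, the reshaped sample matrix
$$M := f(\Omega_z)_{c_1;\,a\cup b\cup c_2}$$
coincides, up to trivial index bookkeeping, with
$$M = \bigl(H^{c_1}_{L_{c_1}+1,L_{c_1}+2;[L_{c_1}]}\bigr)^{T} \bigl(B^{1}\otimes B^{2}\bigr)\, H^{c_2}_{L_{c_2}+1,L_{c_2}+2;[L_{c_2}]}.$$
Writing $A := H^{c_1}_{L_{c_1}+1,L_{c_1}+2;[L_{c_1}]}$ and $D := H^{c_2}_{L_{c_2}+1,L_{c_2}+2;[L_{c_2}]}$, this is $M = A^{T} K D$ with $K := B^{1}\otimes B^{2}$, and by Assumption~\ref{assumption:injectivity} both $A$ and $D$ are full row rank with condition number at most $\kappa$.

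Next I would transfer the approximate rank-one bound for $M$ (which Assumption~\ref{assumption:CI2} provides) through to $K$. Submultiplicativity of the spectral norm gives $\|M\|_{2}\le\sigma_{\max}(A)\sigma_{\max}(D)\|K\|_{2}$. For the Frobenius norm, inverting the factorization via the two pseudo-inverses,
$$K = (AA^{T})^{-1}A\, M\, D^{T}(DD^{T})^{-1},$$
combined with the standard estimate $\|XYZ\|_{F}\le\|X\|_{2}\|Y\|_{F}\|Z\|_{2}$, yields $\|K\|_{F}\le \|M\|_{F}/(\sigma_{\min}(A)\sigma_{\min}(D))$. Taking ratios and using $\sigma_{\max}/\sigma_{\min}\le\kappa$ for both $A$ and $D$ produces
$$\frac{\|K\|_{2}^{2}}{\|K\|_{F}^{2}} \;\ge\; \frac{1}{\kappa^{4}}\,\frac{\|M\|_{2}^{2}}{\|M\|_{F}^{2}} \;\ge\; \frac{\alpha}{\kappa^{4}},$$
where the last inequality is Assumption~\ref{assumption:CI2}.

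The final step exploits that $K$ is genuinely a Kronecker product: the singular values of $B^{1}\otimes B^{2}$ are the products $\sigma_{i}(B^{1})\sigma_{j}(B^{2})$, so $\|K\|_{2}=\|B^{1}\|_{2}\|B^{2}\|_{2}$ and $\|K\|_{F}=\|B^{1}\|_{F}\|B^{2}\|_{F}$. Hence the previous bound factorizes as
$$\frac{\|B^{1}\|_{2}^{2}}{\|B^{1}\|_{F}^{2}}\cdot\frac{\|B^{2}\|_{2}^{2}}{\|B^{2}\|_{F}^{2}} \;\ge\; \frac{\alpha}{\kappa^{4}}.$$
Since $\|\cdot\|_{2}\le\|\cdot\|_{F}$ makes each factor on the left at most $1$, dropping either factor only decreases the product, so each individual ratio is at least $\alpha/\kappa^{4}$, which is \eqref{approx rank-1}.

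The main obstacle is essentially bookkeeping: confirming that the matricization induced by the sample reshaping in Figure~\ref{figure:B12} produces exactly $B^{1}\otimes B^{2}$ (rather than some permuted reshape) as the middle factor of $M$. This amounts to carefully tracking how the four virtual bonds at the $a$-$c$ and $b$-$c$ interfaces are wired, and is already implicit in the proof of Proposition~\ref{proposition:disentangle}; once that is fixed, the remainder is routine norm algebra.
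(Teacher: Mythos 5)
Your proposal is correct and follows essentially the same route as the paper: apply Assumption~\ref{assumption:CI2} to the factorized matrix $(H^{c_1}_{L_{c_1}+1,L_{c_1}+2;[L_{c_1}]})^{T}(B^1\otimes B^2)H^{c_2}_{L_{c_2}+1,L_{c_2}+2;[L_{c_2}]}$, transfer the stable-rank bound through the two full-row-rank factors at the cost of $\kappa_{c_1}^2\kappa_{c_2}^2$, factorize the Kronecker norms, and conclude for each $B^i$ separately since each ratio is at most $1$. You merely make explicit two steps the paper leaves implicit (the pseudo-inverse justification of the middle inequality and the final "each factor $\le 1$" deduction), which is fine.
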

\begin{proof}
  By Assumption~\ref{assumption:CI2}, 
  \begin{align}
	\alpha&\leq\frac{\bigl\lVert \bigl(H^{c_1}_{L_{c_1}+1,L_{c_1}+2;[L_{c_1}]}\bigr)^T B^1 \otimes B^2{H^{c_2}_{L_{c_2}+1,L_{c_2}+2;[L_{c_2}]}} \bigr\rVert_2^2}{\bigl\lVert \bigl(H^{c_1}_{L_{c_1}+1,L_{c_1}+2;[L_{c_1}]}\bigr)^T B^1 \otimes B^2{H^{c_2}_{L_{c_2}+1,L_{c_2}+2;[L_{c_2}]}} \bigr\rVert_F^2}\cr
	&\leq \kappa_{c_1}^2 \kappa_{c_2}^2 \frac{\| B^1\otimes B^2\|_2^2 }{\| B^1\otimes B^2\|_F^2} \cr
	&=\kappa_{c_1}^2 \kappa_{c_2}^2 \frac{\| B^1\|_2^2} {\|B^1\|_F^2}\frac{\| B^2\|_2^2} {\|B^2\|_F^2},
  \end{align}
  where $\kappa_{c_1}, \kappa_{c_2}\leq \kappa$ are condition numbers of
  $H^{c_1}_{L_{c_1}+1,L_{c_1}+2;[L_{c_1}]}$ and $H^{c_2}_{L_{c_2}+1,L_{c_2}+2;[L_{c_2}]}$
  respectively.
\end{proof}
Let $p^b({q^b})^T$ be the best rank-1 approximation to $B^2$. Before registering the next corollary,
we define $H^{[d]\setminus b}$ and $\tilde H^{[d]\setminus a}$ in Figure~\ref{figure:Hdef}.
\begin{figure}[!ht]
  \begin{center}
	\includegraphics[width = 0.7\columnwidth,trim = 0cm 0cm 0cm 0cm,clip]{./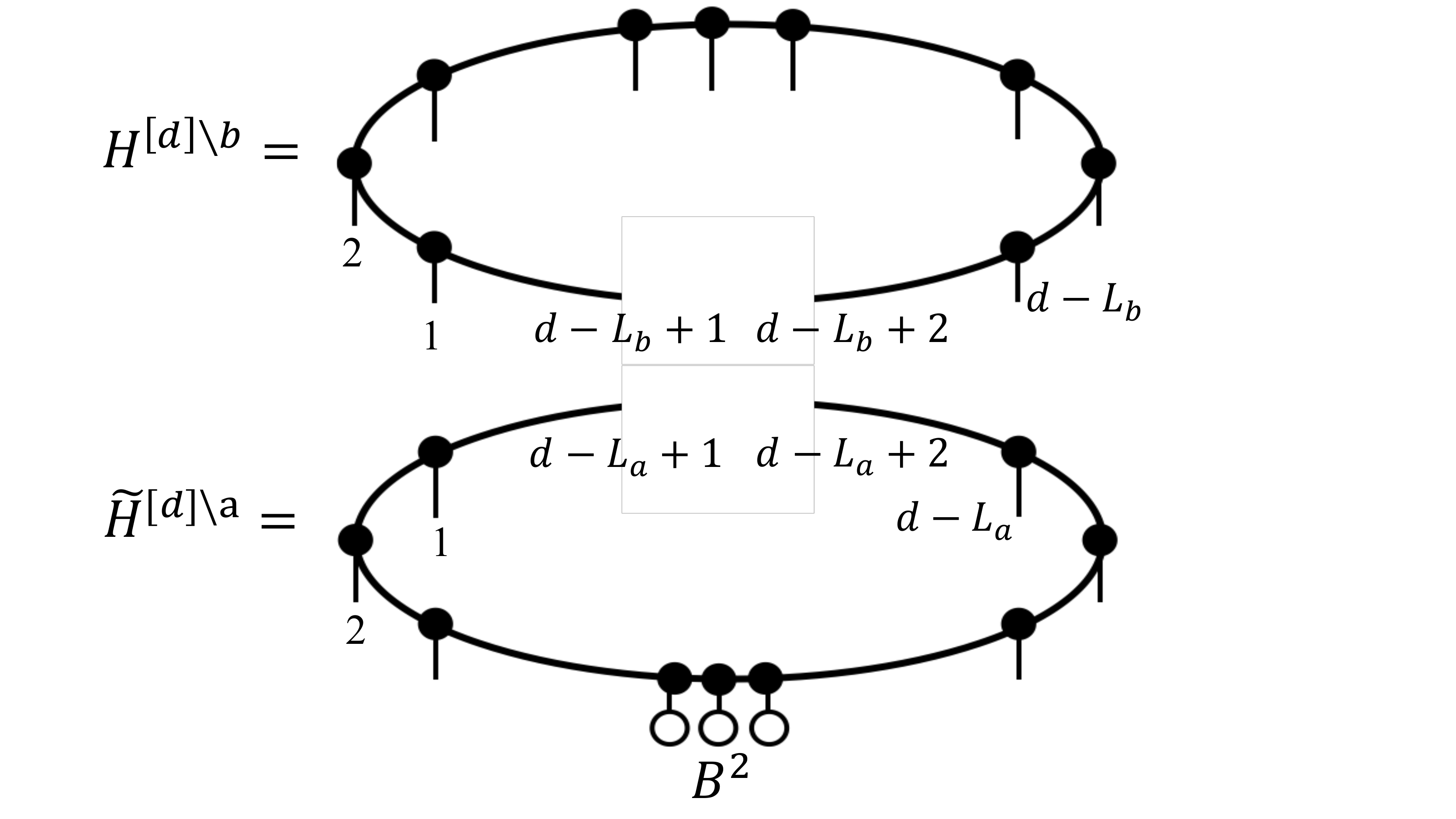}
  \end{center}
  \caption{Definition of $H^{[d]\setminus b}$ and $\tilde H^{[d]\setminus a}$. }\label{figure:Hdef}
\end{figure}
\begin{corollary}\label{corollary:two TT}
  Under the assumptions of Lemma \ref{lemma:rank-1 con}, for any sampling operator $s^2$ defined in
  Proposition \ref{proposition:disentangle},
  \begin{equation}
	\frac{\|H^{[d]\setminus b}_{[d-L_b];d-L_b+1,d-L_b+2} \text{vec}(p^b({q^b})^T) - f_{[d]\setminus b;b}s^2\|_2^2}{\|f_{[d]\setminus b;b}s^2\|_F^2}\leq \kappa^2(1 - \frac{\alpha}{\kappa^4}).
  \end{equation}
\end{corollary}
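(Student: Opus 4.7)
The plan is to unfold the corollary into three ingredients: (i) a reshaping identity that turns the sampled tensor into a matrix--vector product, (ii) the Eckart--Young characterization of the best rank-one approximation error, and (iii) the condition-number bound supplied by Assumption \ref{assumption:injectivity}.

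First I would observe that applying the canonical sampling vector $s^2$ to region $b$ of the tensor ring collapses the corresponding segment into the $r\times r$ matrix $B^2$ of Figure \ref{figure:B12}. Contracting $s^2$ against $f_{[d]\setminus b;b}$ therefore leaves the subnetwork built from $H^{[d]\setminus b}$ joined to $B^2$ along the two latent legs, giving the identity
\begin{equation}
f_{[d]\setminus b;\, b}\, s^2 \;=\; H^{[d]\setminus b}_{[d-L_b];\,d-L_b+1,d-L_b+2}\,\mathrm{vec}(B^2).
\end{equation}
Subtracting this from $H^{[d]\setminus b}_{[d-L_b];\,d-L_b+1,d-L_b+2}\,\mathrm{vec}(p^b(q^b)^T)$, the difference factors through the linear map $H^{[d]\setminus b}_{[d-L_b];\,d-L_b+1,d-L_b+2}$ acting on $\mathrm{vec}(p^b(q^b)^T - B^2)$.

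Next I would invoke Assumption \ref{assumption:injectivity} on the segment $H^{[d]\setminus b}$, which (since $L_a+L_{c_1}+L_{c_2}\geq L_0$ under the hypotheses we inherit from Lemma \ref{lemma:rank-1 con}) guarantees that the reshaped matrix $H^{[d]\setminus b}_{[d-L_b];\,d-L_b+1,d-L_b+2}\in\mathbb{R}^{n^{d-L_b}\times r^2}$ has condition number at most $\kappa$. Writing $\sigma_{\max},\sigma_{\min}$ for its singular values and applying the two-sided operator-norm bound,
\begin{equation}
\frac{\|H^{[d]\setminus b}_{[d-L_b];\,d-L_b+1,d-L_b+2}\,\mathrm{vec}(p^b(q^b)^T - B^2)\|_2^2}{\|H^{[d]\setminus b}_{[d-L_b];\,d-L_b+1,d-L_b+2}\,\mathrm{vec}(B^2)\|_2^2}\;\leq\;\kappa^2\,\frac{\|p^b(q^b)^T - B^2\|_F^2}{\|B^2\|_F^2},
\end{equation}
using that the Frobenius norm coincides with the 2-norm on vectors.

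Finally, because $p^b(q^b)^T$ is the best rank-one approximation of $B^2$, the Eckart--Young theorem gives $\|p^b(q^b)^T - B^2\|_F^2 = \|B^2\|_F^2 - \|B^2\|_2^2$, so the ratio on the right equals $1 - \|B^2\|_2^2/\|B^2\|_F^2$, which is at most $1 - \alpha/\kappa^4$ by Lemma \ref{lemma:rank-1 con}. Combining these two inequalities yields the stated bound.

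The routine part is the Eckart--Young step; the only delicate point is keeping the condition-number inequality sharp by applying $\sigma_{\max}$ and $\sigma_{\min}$ to the numerator and denominator respectively, which requires that the same reshaped matrix $H^{[d]\setminus b}_{[d-L_b];\,d-L_b+1,d-L_b+2}$ is invoked on both sides and that its tall-matrix condition number governs both directions simultaneously. No further machinery beyond Lemma \ref{lemma:rank-1 con} and Assumption \ref{assumption:injectivity} appears necessary.
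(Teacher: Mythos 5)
Your proposal is correct and follows essentially the same route as the paper's own proof: the reshaping identity $f_{[d]\setminus b;b}s^2 = H^{[d]\setminus b}_{[d-L_b];d-L_b+1,d-L_b+2}\,\mathrm{vec}(B^2)$, the $\sigma_{\max}/\sigma_{\min}$ condition-number bound on that reshaped segment, and the Eckart--Young identity combined with Lemma~\ref{lemma:rank-1 con} to control $\|B^2 - p^b(q^b)^T\|_F^2/\|B^2\|_F^2$. The only cosmetic difference is that the paper leaves the reshaping identity and the bound $\kappa_{[d]\setminus b}\leq\kappa$ implicit, whereas you state them explicitly.
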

\begin{proof}
  Lemma \ref{lemma:rank-1 con} implies 
  \begin{align}
	& \frac{\| H^{b}_{L_{b}+1,L_{b}+2;[L_{b}]} s^2 - \text{vec}(p^b ({q^b})^T) \|_2^2}{\| H^{b}_{L_{b}+1,L_{b}+2;[L_{b}]} s^2\|_2^2} \cr
	&\qquad = \frac{\|B^2 - p^b ({q^b})^T\|_F^2}{\|B^2\|_F^2} = \frac{\|B^2\|_F^2 -\| p^b ({q^b})^T\|_F^2}{\|B^2\|_F^2} \leq 1-\frac{\alpha}{\kappa^4}.
  \end{align}
  Then
  \begin{align}
	& \frac{\|H^{[d]\setminus b}_{[d-L_b];d-L_b+1,d-L_b+2} \text{vec}(p^b({q^b})^T) - f_{[d]\setminus b;b}s^2\|_2^2}{\|f_{[d]\setminus b;b}s^2\|_2^2} \cr
	&\qquad \leq  \frac{\|H^{[d]\setminus b}_{[d-L_b];d-L_b+1,d-L_b+2}\|_2^2  \| H^{b}_{L_{b}+1,L_{b}+2;[L_{b}]} s^2 - \text{vec}(p^b ({q^b})^T) \|_2^2}{\| H^{[d]\setminus b}_{[d-L_b];d-L_b+1,d-L_b+2} H^{b}_{L_{b}+1,L_{b}+2;[L_{b}]} s^2 \|_2^2}\cr
	&\qquad \leq  \kappa_{[d]\setminus b}^2 \frac{\| H^{b}_{L_{b}+1,L_{b}+2;[L_{b}]} s^2 - \text{vec}(p^b ({q^b})^T) \|_2^2}{\| H^{b}_{L_{b}+1,L_{b}+2;[L_{b}]} s^2\|_2^2}
  \end{align}
  where $\kappa_{[d]\setminus b}^2$ is the condition number of $H^{[d]\setminus b}_{[d-L_b];d-L_b+1,d-L_b+2}$. Recall that $H^b$ is defined in Figure~\ref{figure:partition}.
\end{proof}
This corollary states that the situation in Figure~\ref{figure:openring} holds approximately. More
precisely, let $T, \hat T\in \mathbb{R}^{n^{d-L_b}}$ be defined as
\begin{equation}\label{Tdef}
	T:=H^{[d]\setminus b}_{[d-L_b];d-L_b+1,d-L_b+2} \text{vec}(p^b({q^b})^T),\ \hat T := f_{[d]\setminus b;b}s^2 
\end{equation}
respectively, as demonstrated in Figure~\ref{figure:Tdef}, where $p^b, q^b$ appear in Corollary
\ref{corollary:two TT}.  Corollary \ref{corollary:two TT} implies
\begin{equation}
  \label{E def}
  T = \hat T + E,\quad \frac{\|E\|_F^2}{\| \hat T \|_F^2} \leq \kappa^2 (1-\frac{\alpha}{\kappa^4}).
\end{equation}
In the following, we want to show that we can approximately extract the $H^k$'s in region $a$.  For
this, we need to take the right-inverses of $\tilde H^{c_1}_{L_{c_1}+1;[L_{c_1}]}$ and $\tilde
H^{c_2}_{L_{c_2}+1;[L_{c_2}]}$, defined in Figure~\ref{figure:Tdecom}. This requires a singular value
lower bound, provided by the next lemma.
\begin{figure}[!ht]
  \subfloat[\label{figure:Tdef}]{%
	\includegraphics[width = 0.5\columnwidth,trim = 1cm 0cm 3cm 0cm,clip]{./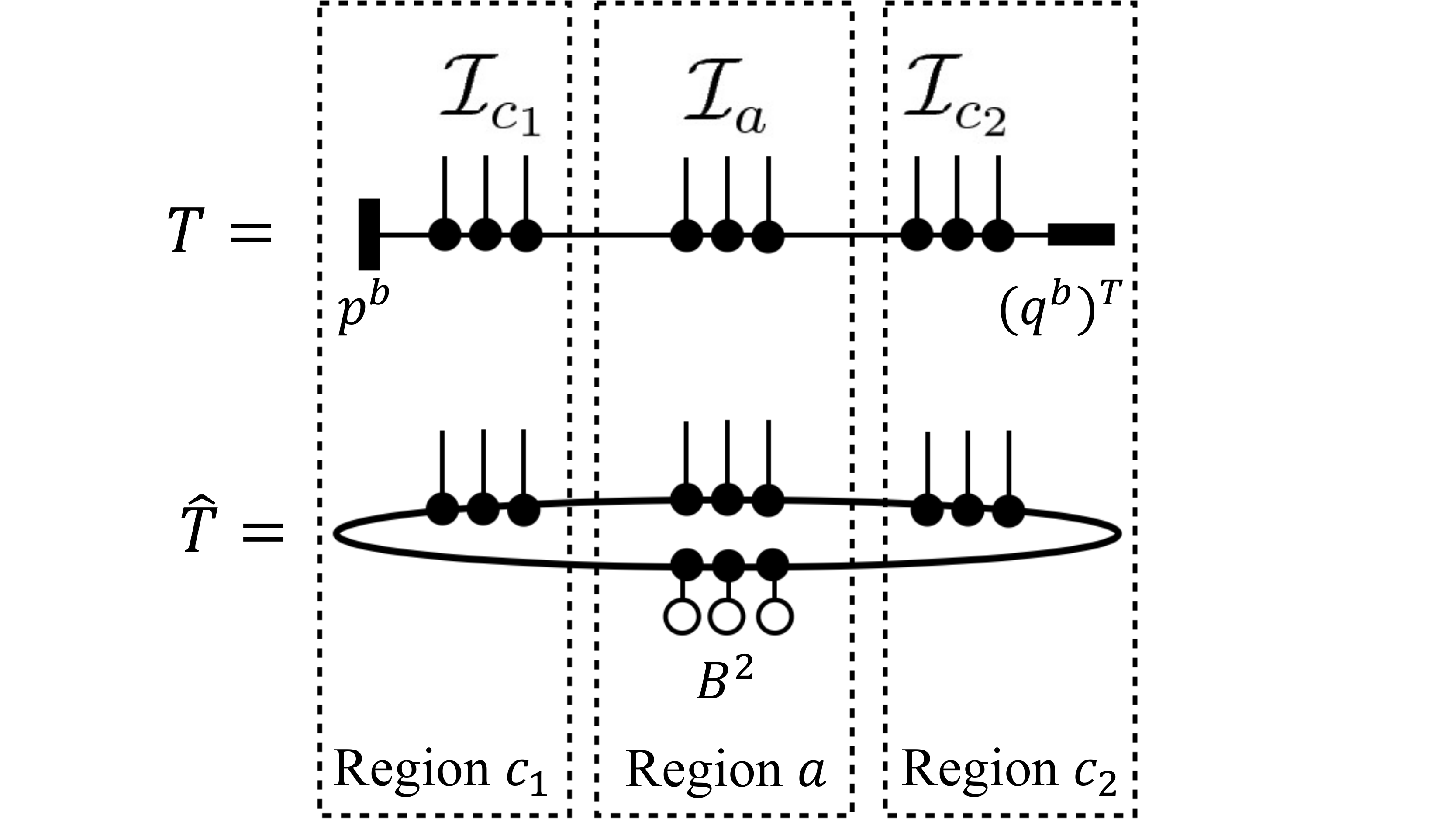}
  }
  \hfill
  \subfloat[\label{figure:Tdecom}]{%
	\includegraphics[width = 0.5\columnwidth,trim = 0cm 2cm 3cm 2cm,clip]{./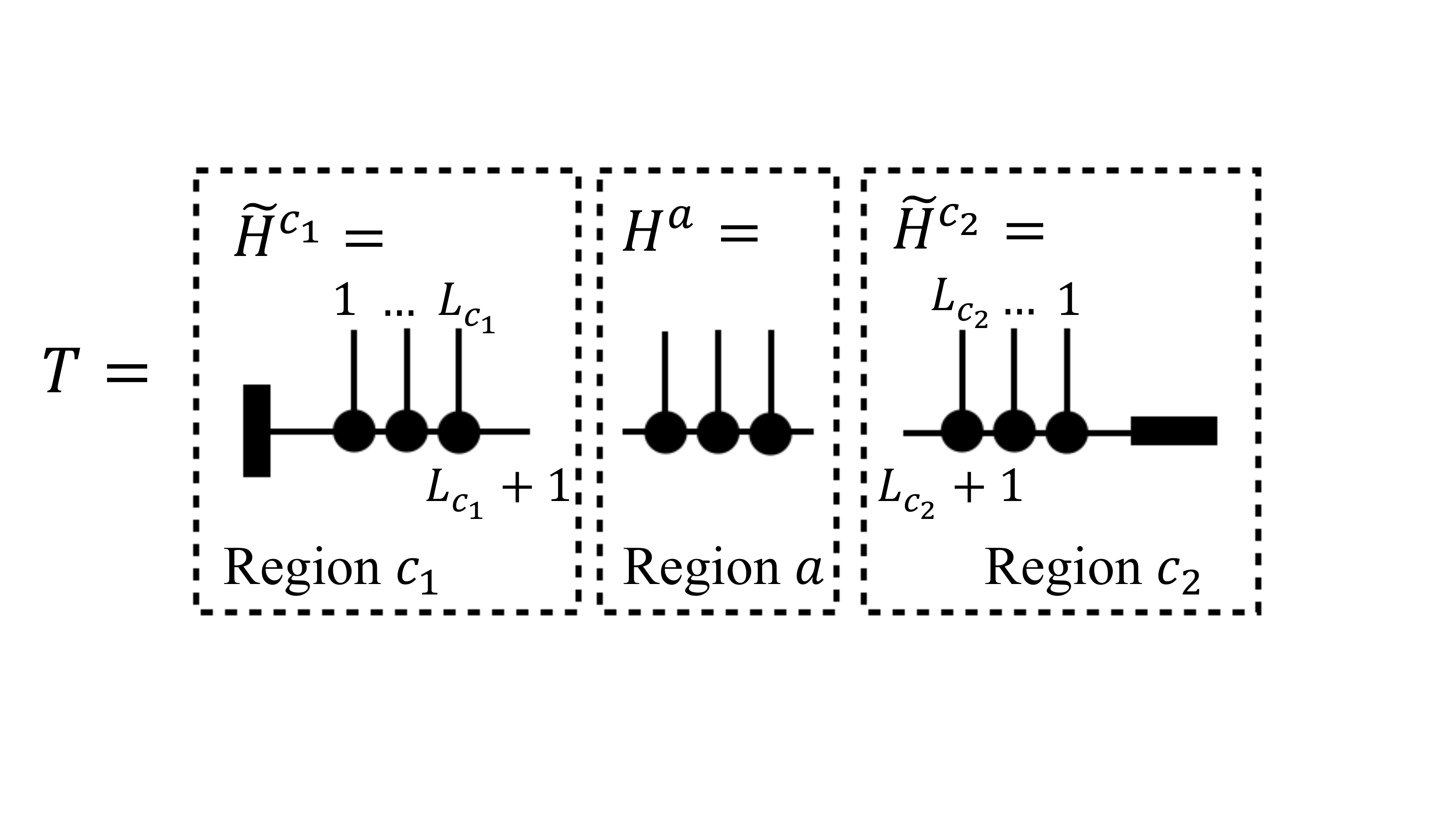}
  }
  \caption{(a) Definition of $T$ and $\hat T$. The dimensions in region $a,c_1,c_2$ are group into
    $\mathcal{I}_a,\mathcal{I}_{c_1},\mathcal{I}_{c_2}$ respectively for the tensors $T$ and $\hat
    T$. (b) Individual components of $T$.}
\end{figure}

\begin{lemma}\label{lemma:eig lower bound}
  Let $\sigma_k: \mathbb{R}^{m_1\times m_2}\rightarrow \mathbb{R}$ be a function that extracts
  the $k-th$ singular value of a $m_1\times m_2$ matrix. Then
  \begin{equation}
	\frac{\sigma_r(\tilde H^{c_1}_{L_{c_1}+1;[L_{c_1}]})^2 \sigma_r(\tilde
      H^{c_2}_{L_{c_2}+1;[L_{c_2}]})^2}{ \|\tilde H^{[d]\setminus a}_{d-L_a+1,d-L_a+2;[d-L_a]}
      \|_2^2}\geq \frac{1}{\kappa^6} - \frac{2 \sqrt{r}}{\kappa^2}\sqrt{ 1 -
      \frac{\alpha}{\kappa^4}}
  \end{equation}
  assuming
  \begin{equation}
    \frac{1}{\kappa^4} - 2 \sqrt{r}\sqrt{ 1 - \frac{\alpha}{\kappa^4}}\geq 0.
  \end{equation}
\end{lemma}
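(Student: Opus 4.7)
The plan is to exploit the rank-one structure of $p^b(q^b)^T$ so that $\tilde H^{[d]\setminus a}$ splits along region $b$. Concretely, because the $b$-portion of the network has been replaced by a rank-one factor, the matricization $\tilde H^{[d]\setminus a}_{d-L_a+1,d-L_a+2;[d-L_a]}$ can be written, up to permutation of rows and columns, as the Kronecker product $\tilde H^{c_1}_{L_{c_1}+1;[L_{c_1}]} \otimes \tilde H^{c_2}_{L_{c_2}+1;[L_{c_2}]}$. This decomposition collapses the spectral norm in the denominator to $\sigma_1(\tilde H^{c_1}_{L_{c_1}+1;[L_{c_1}]}) \cdot \sigma_1(\tilde H^{c_2}_{L_{c_2}+1;[L_{c_2}]})$, so the ratio to be bounded becomes the product of two reciprocal condition numbers
\begin{equation}
\frac{\sigma_r(\tilde H^{c_1}_{L_{c_1}+1;[L_{c_1}]})^2}{\sigma_1(\tilde H^{c_1}_{L_{c_1}+1;[L_{c_1}]})^2} \cdot \frac{\sigma_r(\tilde H^{c_2}_{L_{c_2}+1;[L_{c_2}]})^2}{\sigma_1(\tilde H^{c_2}_{L_{c_2}+1;[L_{c_2}]})^2}.
\end{equation}
This reduction isolates a clean quantity that only depends on the two ``open'' sides of the ring.

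Next, I would relate each $\tilde H^{c_i}$ to the corresponding full ring component $H^{c_i}$ via the contraction with $p^b$ or $q^b$ along one bond. Assumption~\ref{assumption:injectivity} gives condition-number control on the $r^2 \times n^{L_{c_i}}$ reshape $H^{c_i}_{L_{c_i}+1,L_{c_i}+2;[L_{c_i}]}$. If $B^2$ were exactly rank one (i.e.\ $\alpha=1$), the vectors $p^b, q^b$ would lie in the top singular subspace of the relevant bond, and the ratio above would be bounded below by $1/\kappa^4$ through a direct application of Assumption~\ref{assumption:injectivity}; the stated lower bound $1/\kappa^6$ reflects an additional factor of $1/\kappa^2$ incurred when translating control on the $(L_{c_i}+1,L_{c_i}+2;[L_{c_i}])$ reshape into control on the smaller $(L_{c_i}+1;[L_{c_i}])$ matrix obtained after contracting one bond with a unit vector.

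For the correction term, I would invoke Lemma~\ref{lemma:rank-1 con}, which guarantees $\|B^2 - p^b(q^b)^T\|_F / \|B^2\|_F \leq \sqrt{1-\alpha/\kappa^4}$ (and similarly for $B^1$). Treating $p^b(q^b)^T$ as a perturbation of the true $B^2$ and propagating this perturbation through the contraction into $\tilde H^{c_i}$, a Weyl-type singular-value inequality yields an additive perturbation in $\sigma_r(\tilde H^{c_i})$ of order $\sqrt{1-\alpha/\kappa^4}$ (after dividing by the appropriate $\sigma_1$ to normalize the ratio). Converting the Frobenius-norm bound from Lemma~\ref{lemma:rank-1 con} to the spectral norm used in Weyl's inequality on an $r\times r$ matrix costs the $\sqrt{r}$ factor visible in the statement, while the factor of $2$ collects the two one-sided contributions from $\tilde H^{c_1}$ and $\tilde H^{c_2}$ via the triangle inequality. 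The assumption $\frac{1}{\kappa^4} - 2\sqrt{r}\sqrt{1-\alpha/\kappa^4}\geq 0$ is exactly what is needed to keep the lower bound nonnegative after subtracting the perturbation term.

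The main obstacle will be the second step: cleanly tracking how the condition-number control in Assumption~\ref{assumption:injectivity}, which is phrased for the $(L_{c_i}+1,L_{c_i}+2;[L_{c_i}])$ reshape, transfers to the smaller $(L_{c_i}+1;[L_{c_i}])$ reshape of $\tilde H^{c_i}$ that arises after contraction with $p^b$ or $q^b$. Since contraction with a vector is not in general bound-preserving for the smallest singular value (the vector may be partially orthogonal to the relevant singular subspace), this is where the additional power of $\kappa$ enters and where the precise form of the numerical constants must be carefully balanced against the Weyl-type perturbation term so that the final expression matches the stated bound.
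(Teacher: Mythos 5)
There is a genuine gap, and it sits in your very first step. You assert that $\tilde H^{[d]\setminus a}_{d-L_a+1,d-L_a+2;[d-L_a]}$ factors, up to permutation, as $\tilde H^{c_1}_{L_{c_1}+1;[L_{c_1}]}\otimes\tilde H^{c_2}_{L_{c_2}+1;[L_{c_2}]}$ ``because the $b$-portion of the network has been replaced by a rank-one factor.'' But it has not been: $\tilde H^{[d]\setminus a}$ is defined (Figure~\ref{figure:Hdef}) as the exact environment of $\hat T=f_{[d]\setminus b;b}s^2$, i.e.\ it satisfies $\hat T_{\mathcal{I}_a;\mathcal{I}_{c_1}\mathcal{I}_{c_2}}=H^a_{[L_a];L_a+1,L_a+2}\,\tilde H^{[d]\setminus a}_{d-L_a+1,d-L_a+2;[d-L_a]}$, so it carries the full bond matrix $B^2$, which by Lemma~\ref{lemma:rank-1 con} is only \emph{approximately} rank one. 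The Kronecker factorization you invoke holds for the environment of the perturbed tensor $T=H^{[d]\setminus b}_{[d-L_b];d-L_b+1,d-L_b+2}\,\mathrm{vec}(p^b(q^b)^T)$, not of $\hat T$; relating these two objects is precisely the content of the lemma, so assuming the factorization begs the question. Your later perturbation step compounds the problem: you propose to perturb $\sigma_r(\tilde H^{c_i})$ via a Weyl-type bound, but those singular values are exact quantities of the rank-one-contracted pieces in the numerator — the perturbation $E=T-\hat T$ lives in the \emph{denominator} $\|\tilde H^{[d]\setminus a}\|_2$, which your reduction has already (incorrectly) discarded. Incidentally, the step you single out as the ``main obstacle'' is not one: contracting one bond of $H^{c_i}_{L_{c_i}+1,L_{c_i}+2;[L_{c_i}]}$ with a fixed vector $p^b$ acts on the relevant $r$-dimensional space through $v\mapsto v\otimes p^b$, an isometry up to the scalar $\|p^b\|$, so the condition-number control of Assumption~\ref{assumption:injectivity} transfers to the one-bond reshape with no loss at all.

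The paper's proof takes a route that avoids this trap: it introduces the intermediate quantity $\sigma_{r^2}(T_{\mathcal{I}_a;\mathcal{I}_{c_1}\mathcal{I}_{c_2}})^2/\|\hat T_{\mathcal{I}_a;\mathcal{I}_{c_1}\mathcal{I}_{c_2}}\|_2^2$ and sandwiches it. From above, the exact factorization $T_{\mathcal{I}_a;\mathcal{I}_{c_1}\mathcal{I}_{c_2}}=H^a_{[L_a];L_a+1,L_a+2}\bigl(\tilde H^{c_1}_{L_{c_1}+1;[L_{c_1}]}\otimes\tilde H^{c_2}_{L_{c_2}+1;[L_{c_2}]}\bigr)$ together with $\hat T=H^a\tilde H^{[d]\setminus a}$ bounds it by $\kappa^2$ times the target ratio. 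From below, Mirsky's singular-value perturbation theorem applied to $T=\hat T+E$, with $\|E\|_F/\|\hat T\|_F$ controlled by Corollary~\ref{corollary:two TT}, gives $\frac{1}{\kappa^4}-2\sqrt{r}\sqrt{1-\alpha/\kappa^4}$; this is where the $\sqrt{r}$ and the correction term actually originate. To salvage your approach you would have to quantify the discrepancy between $\|\tilde H^{[d]\setminus a}\|_2$ and $\|\tilde H^{c_1}\otimes\tilde H^{c_2}\|_2$, which in effect reproduces the paper's perturbation argument.
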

\begin{proof}
  Firstly,
  \begin{align}
	\frac{\sigma_{r^2}(T_{\mathcal{I}_{a}; \mathcal{I}_{c_1} \mathcal{I}_{c_2}})^2}{\|\hat T_{\mathcal{I}_{a}; \mathcal{I}_{c_1},\mathcal{I}_{c_2}}\|_2^2} &\leq \frac{\|H^a_{[L_a];L_a+1,L_a+2}\|_2^2 \sigma_{r^2}(\tilde H^{c_1}_{L_{c_1}+1;[L_{c_1}]} \otimes \tilde H^{c_2}_{L_{c_2}+1;[L_{c_2}]})^2}{\|\hat T_{\mathcal{I}_{a}; \mathcal{I}_{c_1},\mathcal{I}_{c_2}}\|_2^2}\cr
	&= \frac{\|H^a_{[L_a];L_a+1,L_a+2}\|_2^2 \sigma_r(\tilde H^{c_1}_{L_{c_1}+1;[L_{c_1}]})^2 \sigma_r(\tilde H^{c_2}_{L_{c_2}+1;[L_{c_2}]})^2}{\| H^a_{[L_a];L_a+1,L_a+2} \tilde H^{[d]\setminus a}_{d-L_a+1,d-L_a+2;[d-L_a]}\|_2^2}\cr
	&\leq \frac{\|H^a_{[L_a];L_a+1,L_a+2}\|_2^2 \sigma_r(\tilde H^{c_1}_{L_{c_1}+1;[L_{c_1}]})^2 \sigma_r(\tilde H^{c_2}_{L_{c_2}+1;[L_{c_2}]})^2}{\sigma_{r^2} (H^a_{[L_a];L_a+1,L_a+2} )^2 \|\tilde H^{[d]\setminus a}_{d-L_a+1,d-L_a+2;[d-L_a]} \|_2^2}\cr
	&\leq \kappa^2 \frac{\sigma_r(\tilde H^{c_1}_{L_{c_1}+1;[L_{c_1}]})^2 \sigma_r(\tilde H^{c_2}_{L_{c_2}+1;[L_{c_2}]})^2}{ \|\tilde H^{[d]\setminus a}_{d-L_a+1,d-L_a+2;[d-L_a]} \|_2^2} \, .
  \end{align}
  The equality follows from $$\hat T_{\mathcal{I}_{a}; \mathcal{I}_{c_1},\mathcal{I}_{c_2}} =
  H^a_{[L_a];L_a+1,L_a+2} \tilde H^{[d]\setminus a}_{d-L_a+1,d-L_a+2;[d-L_a]},$$ which follows from
  \eqref{Tdef}, and the definition of $\tilde H^{[d]\setminus a}$ in Figure~\ref{figure:Hdef}.
  
  Observe that
  \begin{align}\label{eig bound}
	\frac{\sigma_{r^2}(T_{\mathcal{I}_{a}; \mathcal{I}_{c_1},\mathcal{I}_{c_2}})^2}{\|\hat T_{\mathcal{I}_{a}; \mathcal{I}_{c_1},\mathcal{I}_{c_2}}\|_2^2} &\geq \frac{\sigma_{r^2}(\hat T_{\mathcal{I}_{a}; \mathcal{I}_{c_1},\mathcal{I}_{c_2}})^2-2\| E \|_F\sigma_{r^2}(\hat T_{\mathcal{I}_{a}; \mathcal{I}_{c_1},\mathcal{I}_{c_2}}) +\| E \|_F^2}{\|\hat T_{\mathcal{I}_{a}; \mathcal{I}_{c_1},\mathcal{I}_{c_2}}\|_2^2} \cr
	&\geq \frac{\sigma_{r^2}(\hat T_{\mathcal{I}_{a}; \mathcal{I}_{c_1},\mathcal{I}_{c_2}})^2}{\|\hat T_{\mathcal{I}_{a}; \mathcal{I}_{c_1},\mathcal{I}_{c_2}}\|_2^2} -  \frac{2\| E \|_F\sigma_{r^2}(\hat T_{\mathcal{I}_{a}; \mathcal{I}_{c_1},\mathcal{I}_{c_2}}) }{\|\hat T_{\mathcal{I}_{a}; \mathcal{I}_{c_1},\mathcal{I}_{c_2}}\|_2^2}\cr
	&\geq  \frac{\sigma_{r^2}(H^a_{[L_a];L_a+1,L_a+2})^2 \sigma_{r^2}(\tilde H^{[d]\setminus a}_{d-L_a+1,d-L_a+2;[d-L_a]} )^2}{\| H^a_{[L_a];L_a+1,L_a+2}\|_2^2 \|\tilde H^{[d]\setminus a}_{d-L_a+1,d-L_a+2;[d-L_a]} \|_2^2 }\cr
	& \qquad - \frac{2\| E \|_F\sigma_{r^2}(\hat T_{\mathcal{I}_{a}; \mathcal{I}_{c_1},\mathcal{I}_{c_2}}) }{\|\hat T_{\mathcal{I}_{a}; \mathcal{I}_{c_1},\mathcal{I}_{c_2}}\|_2^2} \cr
	&\geq  \frac{1}{\kappa^4} - \frac{2\| E \|_F\sigma_{r^2}(\hat T_{\mathcal{I}_{a}; \mathcal{I}_{c_1},\mathcal{I}_{c_2}}) }{\|\hat T_{\mathcal{I}_{a}; \mathcal{I}_{c_1},\mathcal{I}_{c_2}}\|_2^2} \cr
	&\geq \frac{1}{\kappa^4}  - \frac{2\sqrt{r}\sigma_{r^2}(\hat T_{\mathcal{I}_{a}; \mathcal{I}_{c_1},\mathcal{I}_{c_2}}) \|E\|_F }{\|\hat T_{\mathcal{I}_{a} ; \mathcal{I}_{c_1},\mathcal{I}_{c_2}}\|_2 \|\hat T_{\mathcal{I}_{a} ; \mathcal{I}_{c_1},\mathcal{I}_{c_2}}\|_F} \cr
	&\geq  \frac{1}{\kappa^4}  - 2\sqrt{r}  \sqrt{ 1 - \frac{\alpha}{\kappa^4}},
  \end{align}
  we established the claim. The first inequality regarding perturbation of singular values follows
  from theorem by Mirsky \cite{mirsky1960symmetric}:
  \begin{equation}
    \bigl\lvert \sigma_{r^2}(T_{\mathcal{I}_{a}; \mathcal{I}_{c_1},\mathcal{I}_{c_2}}) - 	\sigma_{r^2}(\hat T_{\mathcal{I}_{a}; \mathcal{I}_{c_1},\mathcal{I}_{c_2}}) \bigr\rvert \leq \|E\|_2  \leq \|E\|_F,
  \end{equation}
  and assuming $\|E\|_F\leq \sigma_{r^2}(\hat T_{\mathcal{I}_{a};
    \mathcal{I}_{c_1},\mathcal{I}_{c_2}})$ . Such assumption holds when demanding the lower bound in
  \eqref{eig bound} to be nonnegative, i.e.
  \begin{equation}
	\frac{\sigma_{r^2}(\hat T_{\mathcal{I}_{a}; \mathcal{I}_{c_1},\mathcal{I}_{c_2}})^2}{\|\hat T_{\mathcal{I}_{a}; \mathcal{I}_{c_1},\mathcal{I}_{c_2}}\|_2^2} -  \frac{2\| E \|_F\sigma_{r^2}(\hat T_{\mathcal{I}_{a}; \mathcal{I}_{c_1},\mathcal{I}_{c_2}}) }{\|\hat T_{\mathcal{I}_{a}; \mathcal{I}_{c_1},\mathcal{I}_{c_2}}\|_2^2} \geq  \frac{1}{\kappa^4}  - 2\sqrt{r}  \sqrt{ 1 - \frac{\alpha}{\kappa^4}} \geq 0
  \end{equation}
  The last inequality follows from Corollary \ref{corollary:two TT}.
\end{proof}

In the next lemma, we prove that when applying Algorithm~\ref{alg:initialization} to $\hat T$, where
$\hat T$ is treated as a 3-tensor formed from grouping the dimensions in each of set
$\mathcal{I}_a,\mathcal{I}_{c_1}\mathcal{I}_{c_2}$, gives close approximation to $\hat T$.
\begin{lemma}\label{lemma:TT approx}
  Let
  \begin{align}
	\Pi_1 &:= \bigl\{Y\ \vert \ Y=XX^T, X \in \mathbb{R}^{n^{L_{c_1}}\times r},\ X^T X = I \bigr\},\cr
	\Pi_2 &:= \bigl\{Y\ \vert \ Y=XX^T, X \in \mathbb{R}^{n^{L_{c_2}}\times r},\ X^T X = I \bigr\},
  \end{align}
  where $I$ is the identity matrix. Let $P_1^*\in \Pi_1$ be the best rank-$r$ projection for $\hat T_{\mathcal{I}_{c_2}\mathcal{I}_a;\mathcal{I}_{c_1}}$ such that $\hat T_{\mathcal{I}_{c_2}\mathcal{I}_a;\mathcal{I}_{c_1}}P_1^* \approx \hat T_{\mathcal{I}_{c_2}\mathcal{I}_a;\mathcal{I}_{c_1}}$ in Frobenius-norm, and  $$P_2^* = \min_{P_2\in \Pi_2}\| (\hat T_{\mathcal{I}_a;\mathcal{I}_{c_1}\mathcal{I}_{c_2}}(I\otimes P_2) - \hat T_{\mathcal{I}_a;\mathcal{I}_{c_1}\mathcal{I}_{c_2}})(P_1^*\otimes I)\|_F^2.$$
  Then
  \begin{equation}
	\| \hat T_{\mathcal{I}_a;\mathcal{I}_{c_1}\mathcal{I}_{c_2}}(I\otimes P_2^*)(P_1^*\otimes I)
    - \hat T_{\mathcal{I}_a;\mathcal{I}_{c_1}\mathcal{I}_{c_2}}\|_F^2 \leq 2\|E\|_F^2.
  \end{equation}
\end{lemma}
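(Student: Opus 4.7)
The plan is to imitate the standard TT-SVD stability argument for the three-node tensor train structure of $T$ depicted in Figure~\ref{figure:Tdecom}, exploiting that $\hat T = T - E$ with $T$ of exact TT bond dimension $r$. Writing $I\otimes I - P_1^* \otimes P_2^* = (I - P_1^*)\otimes I + P_1^* \otimes (I - P_2^*)$ and using $(I - P_1^*)P_1^* = 0$ to ensure that these two summands have orthogonal ranges in the Frobenius inner product yields the Pythagorean identity
\begin{equation}
\| \hat T_{\mathcal{I}_a;\mathcal{I}_{c_1}\mathcal{I}_{c_2}}(I - P_1^* \otimes P_2^*) \|_F^2 = \| \hat T_{\mathcal{I}_a;\mathcal{I}_{c_1}\mathcal{I}_{c_2}}((I - P_1^*)\otimes I) \|_F^2 + \| \hat T_{\mathcal{I}_a;\mathcal{I}_{c_1}\mathcal{I}_{c_2}}(P_1^* \otimes (I - P_2^*)) \|_F^2,
\end{equation}
so it suffices to bound each of the right-hand summands by $\|E\|_F^2$.

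For the first summand, the three-node TT structure of $T$ forces $T_{\mathcal{I}_{c_2}\mathcal{I}_a;\mathcal{I}_{c_1}}$ to have rank at most $r$ (cutting the single bond adjacent to $\mathcal{I}_{c_1}$), so there is a rank-$r$ projection $P_T^{(1)} \in \Pi_1$ with $T_{\mathcal{I}_{c_2}\mathcal{I}_a;\mathcal{I}_{c_1}}(I - P_T^{(1)}) = 0$. The optimality of $P_1^*$ and the identity $T = \hat T + E$ then give
\begin{equation}
\| \hat T_{\mathcal{I}_{c_2}\mathcal{I}_a;\mathcal{I}_{c_1}}(I - P_1^*) \|_F \leq \| \hat T_{\mathcal{I}_{c_2}\mathcal{I}_a;\mathcal{I}_{c_1}}(I - P_T^{(1)}) \|_F = \| E_{\mathcal{I}_{c_2}\mathcal{I}_a;\mathcal{I}_{c_1}}(I - P_T^{(1)}) \|_F \leq \|E\|_F.
\end{equation}
For the second summand, the other one-bond matricization $T_{\mathcal{I}_{c_1}\mathcal{I}_a;\mathcal{I}_{c_2}}$ likewise has rank at most $r$, so there exists $P_T^{(2)} \in \Pi_2$ with $T(I \otimes P_T^{(2)}) = T$ in the $\mathcal{I}_a;\mathcal{I}_{c_1}\mathcal{I}_{c_2}$ form; right-multiplying by $P_1^* \otimes I$ yields $T(P_1^* \otimes (I - P_T^{(2)})) = 0$, hence
\begin{equation}
\| \hat T_{\mathcal{I}_a;\mathcal{I}_{c_1}\mathcal{I}_{c_2}}(P_1^* \otimes (I - P_T^{(2)})) \|_F = \| E_{\mathcal{I}_a;\mathcal{I}_{c_1}\mathcal{I}_{c_2}}(P_1^* \otimes (I - P_T^{(2)})) \|_F \leq \|E\|_F,
\end{equation}
and the optimality of $P_2^*$ transfers this bound to $\| \hat T_{\mathcal{I}_a;\mathcal{I}_{c_1}\mathcal{I}_{c_2}}(P_1^* \otimes (I - P_2^*)) \|_F$.

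The only delicate point is confirming that both one-bond matricizations of $T$ have rank at most $r$, which follows directly from the construction at the start of the section: the rank-one replacement $p^b(q^b)^T$ cuts the ring through region $b$ into a three-node tensor train whose two surviving bonds retain dimension $r$ from the original TR. The rest is the standard choice of oracle competitors $P_T^{(1)}, P_T^{(2)}$ compared against the optimal projections via Eckart--Young-type inequalities in each of the two sweeps.
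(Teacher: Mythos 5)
Your proof is correct and is essentially the paper's own argument: the orthogonal (Pythagorean) split of $I-P_1^*\otimes P_2^*$ into $(I-P_1^*)\otimes I$ and $P_1^*\otimes(I-P_2^*)$ coincides with the paper's decomposition, and both proofs then bound each summand by $\|E\|_F^2$ by comparing the optimal projections against oracle rank-$r$ projectors coming from the exact bond-dimension-$r$ matricizations of $T$. The only cosmetic difference is that the paper first drops the outer projection $P_1^*\otimes I$ before invoking the oracle for $P_2$, while you keep it; this changes nothing.
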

\begin{proof}
  To simplify the notations, let $\tilde T_{\mathcal{I}_a;\mathcal{I}_{c_1}\mathcal{I}_{c_2}} :=
  \hat T_{\mathcal{I}_a;\mathcal{I}_{c_1}\mathcal{I}_{c_2}}(I\otimes P_2)$.  Then
  \begin{align}
	& \min_{P_2\in \Pi_2}\| \hat T_{\mathcal{I}_a;\mathcal{I}_{c_1}\mathcal{I}_{c_2}}(I\otimes P_2)(P_1^*\otimes I) - \hat T_{\mathcal{I}_a;\mathcal{I}_{c_1}\mathcal{I}_{c_2}}\|_F^2\cr
	=& \min_{P_2\in \Pi_2}\| (\tilde T_{\mathcal{I}_a;\mathcal{I}_{c_1}\mathcal{I}_{c_2}} -  \hat T_{\mathcal{I}_a;\mathcal{I}_{c_1}\mathcal{I}_{c_2}} +  \hat T_{\mathcal{I}_a;\mathcal{I}_{c_1}\mathcal{I}_{c_2}}) (P_1^* \otimes I) -  \hat T_{\mathcal{I}_a;\mathcal{I}_{c_1}\mathcal{I}_{c_2}}\|_F^2\cr
	=&\min_{P_2\in \Pi_2}\| \hat T_{\mathcal{I}_a;\mathcal{I}_{c_1}\mathcal{I}_{c_2}}(I-P_1^*\otimes I)\|_F^2 + \| (\tilde T_{\mathcal{I}_a;\mathcal{I}_{c_1}\mathcal{I}_{c_2}} - \hat T_{\mathcal{I}_a;\mathcal{I}_{c_1}\mathcal{I}_{c_2}})(P_1^*\otimes I)\|_F^2\cr
	\leq&\min_{P_2\in \Pi_2}\| \hat T_{\mathcal{I}_a;\mathcal{I}_{c_1}\mathcal{I}_{c_2}}(I-P_1^*\otimes I)\|_F^2 + \| \tilde T_{\mathcal{I}_a;\mathcal{I}_{c_1}\mathcal{I}_{c_2}} - \hat T_{\mathcal{I}_a;\mathcal{I}_{c_1}\mathcal{I}_{c_2}}\|_F^2\cr
	=& \min_{P_2\in \Pi_2}\| \hat T_{\mathcal{I}_a;\mathcal{I}_{c_1}\mathcal{I}_{c_2}}(I-P_1^*\otimes I)\|_F^2 + \| \hat T_{\mathcal{I}_a;\mathcal{I}_{c_1}\mathcal{I}_{c_2}}(I-I\otimes P_2)\|_F^2.
  \end{align}
  The inequality comes from the fact that $P_1^*\otimes I$ is a projection matrix. Next,
  \begin{multline}
	\| \hat T_{\mathcal{I}_a;\mathcal{I}_{c_1}\mathcal{I}_{c_2}}(I-P_1^*\otimes I)\|_F^2 + \min_{P_2\in \Pi_2}\| \hat T_{\mathcal{I}_a;\mathcal{I}_{c_1}\mathcal{I}_{c_2}}(I-I\otimes P_2)\|_F^2 \\
	= \min_{P_1\in \Pi_1} \| \hat T_{\mathcal{I}_a;\mathcal{I}_{c_1}\mathcal{I}_{c_2}}(I-P_1\otimes I)\|_F^2 + \min_{P_2\in \Pi_2}\| \hat T_{\mathcal{I}_a;\mathcal{I}_{c_1}\mathcal{I}_{c_2}}(I-I\otimes P_2)\|_F^2\\
	\leq \|E\|_F^2 + \|E\|_F^2 \leq 2\|E\|_F^2,
  \end{multline}
  and we can conclude the lemma. The equality comes from the definition of $P_1^*$, whereas the
  inequality is due to the facts that $P_1, P_2$ are rank-$r$ projectors, and there exists $T$ such
  that $ \hat T = T - E$ where $\text{rank}(T_{\mathcal{I}_{c_1}\mathcal{I}_a;\mathcal{I}_{c_2}})$,
  $\text{rank}(T_{\mathcal{I}_{c_1};\mathcal{I}_a\mathcal{I}_{c_2}})$ $\leq r$.
\end{proof}

We are ready to state the final proposition.
\begin{proposition}\label{proposition:stability}
  Let 
  \begin{eqnarray}
	\doublehat{T}_{\mathcal{I}_a;\mathcal{I}_{c_1}\mathcal{I}_{c_2}}:= \hat T_{\mathcal{I}_a;\mathcal{I}_{c_1}\mathcal{I}_{c_2}}(I\otimes P_2^*)(P_1^*\otimes I) 
  \end{eqnarray}
  where $P_1^*, P_2^*$ are defined in Lemma~\ref{lemma:TT approx}. Then 
  \begin{equation}
	\frac{\| H^a_{[L_a];L_a+1,L_a+2} - \doublehat{T}_{\mathcal{I}_{a}; \mathcal{I}_{c_1} \mathcal{I}_{c_2}}(\tilde H^{c_1}_{L_{c_1}+1;[L_{c_1}]}\otimes \tilde H^{c_2}_{L_{c_2}+1;[L_{c_2}]})^{\dagger} \|_F^2}{\| H^a_{[L_a];L_a+1,L_a+2}\|_F^2 }\leq \frac{(1+\sqrt{2})^2\kappa^4(1-\frac{\alpha}{\kappa^4})}{\frac{1}{\kappa^4} - 2\sqrt{r}\sqrt{ 1 - \frac{\alpha}{\kappa^4}}},
  \end{equation}
  where ``$\dagger$'' is used to denote the pseudo-inverse of a matrix, if the upper bound is positive. When $\kappa = 1+\delta \kappa$ and $\alpha = 1-\delta \alpha$ where $\delta \kappa, \delta \alpha\geq 0$ are small parameters, we have
  \begin{equation}
	\frac{\| H^a_{[L_a];L_a+1,L_a+2} - \doublehat{T}_{\mathcal{I}_{a}; \mathcal{I}_{c_1} \mathcal{I}_{c_2}}(\tilde H^{c_1}_{L_{c_1}+1;[L_{c_1}]}\otimes \tilde H^{c_2}_{L_{c_2}+1;[L_{c_2}]})^{\dagger} \|_F^2}{\| H^a_{[L_a];L_a+1,L_a+2}\|_F^2 } \leq O(\delta \alpha + 4 \delta \kappa ).
  \end{equation}
\end{proposition}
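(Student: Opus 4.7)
The plan is to reduce the error to a clean factorization identity and then bound each piece using the preceding lemmas. From \eqref{Tdef} and Figure \ref{figure:Tdecom}, one has the exact factorization
\begin{equation*}
T_{\mathcal{I}_a;\mathcal{I}_{c_1}\mathcal{I}_{c_2}} = H^a_{[L_a];L_a+1,L_a+2}\bigl(\tilde H^{c_1}_{L_{c_1}+1;[L_{c_1}]}\otimes \tilde H^{c_2}_{L_{c_2}+1;[L_{c_2}]}\bigr).
\end{equation*}
Assumption \ref{assumption:injectivity} makes the Kronecker product full row rank $r^2$, so it admits a right-inverse and $H^a = T\bigl(\tilde H^{c_1}\otimes\tilde H^{c_2}\bigr)^\dagger$ exactly. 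Subtracting the candidate and using $\|AB\|_F \le \|A\|_F\|B\|_2$ together with $\bigl\|(\tilde H^{c_1}\otimes\tilde H^{c_2})^\dagger\bigr\|_2 = 1/\bigl(\sigma_r(\tilde H^{c_1})\sigma_r(\tilde H^{c_2})\bigr)$ gives the key intermediate inequality
\begin{equation*}
\bigl\|H^a - \doublehat T(\tilde H^{c_1}\otimes\tilde H^{c_2})^\dagger\bigr\|_F \;\le\; \frac{\|T - \doublehat T\|_F}{\sigma_r(\tilde H^{c_1})\sigma_r(\tilde H^{c_2})}.
\end{equation*}

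Next I bound $\|T - \doublehat T\|_F$ by a triangle inequality through $\hat T$: the first piece $\|T-\hat T\|_F$ equals $\|E\|_F$ by \eqref{E def}, while the second piece $\|\hat T - \doublehat T\|_F = \|\hat T(I\otimes P_2^*)(P_1^*\otimes I) - \hat T\|_F$ is bounded by $\sqrt{2}\|E\|_F$ via Lemma \ref{lemma:TT approx}. Summing yields $\|T-\doublehat T\|_F \le (1+\sqrt{2})\|E\|_F$, which after squaring produces the $(1+\sqrt{2})^2$ prefactor in the statement.

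To assemble the final bound, I substitute Corollary \ref{corollary:two TT}, $\|E\|_F^2 \le \kappa^2(1-\alpha/\kappa^4)\|\hat T\|_F^2$, and Lemma \ref{lemma:eig lower bound} rewritten as $\sigma_r(\tilde H^{c_1})^2\sigma_r(\tilde H^{c_2})^2 \ge \|\tilde H^{[d]\setminus a}\|_2^2 \cdot \kappa^{-2}\bigl(\kappa^{-4} - 2\sqrt{r}\sqrt{1-\alpha/\kappa^4}\bigr)$; the two factors of $\kappa^2$ from these substitutions combine to give the $\kappa^4$ in the numerator of the claim. To close the normalization by $\|H^a\|_F^2$, I use the factorization $T = H^a\tilde H^{[d]\setminus a}$ to get $\|T\|_F \le \|H^a\|_F\|\tilde H^{[d]\setminus a}\|_2$, together with $\hat T = T - E$, to conclude $\|\hat T\|_F^2 / \bigl(\|H^a\|_F^2\|\tilde H^{[d]\setminus a}\|_2^2\bigr) \le 1$ up to a correction that tends to $1$ when $\kappa\sqrt{1-\alpha/\kappa^4}$ is small. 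Collecting factors delivers the stated ratio $(1+\sqrt{2})^2\kappa^4(1-\alpha/\kappa^4)/\bigl(\kappa^{-4} - 2\sqrt{r}\sqrt{1-\alpha/\kappa^4}\bigr)$.

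For the asymptotic statement, set $\kappa = 1+\delta\kappa$ and $\alpha = 1-\delta\alpha$, and Taylor-expand: $1-\alpha/\kappa^4 = \delta\alpha + 4\delta\kappa + O(\delta^2)$, $\kappa^4 \to 1$, and $\kappa^{-4} - 2\sqrt{r}\sqrt{1-\alpha/\kappa^4} \to 1$, giving $O(\delta\alpha + 4\delta\kappa)$. The main technical obstacle is the closing step above: because $\hat T = T - E$ rather than $\hat T = H^a\tilde H^{[d]\setminus a}$, the replacement of $\|\hat T\|_F$ by $\|H^a\|_F\|\tilde H^{[d]\setminus a}\|_2$ is not an equality, and one must absorb a self-referential correction of order $\kappa\sqrt{1-\alpha/\kappa^4}$ by solving a linear inequality in $\|\hat T\|_F$; this correction vanishes in the asymptotic regime and does not affect the leading-order rate.
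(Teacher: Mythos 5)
Your argument tracks the paper's proof almost line for line: the identity $H^a_{[L_a];L_a+1,L_a+2}=T_{\mathcal{I}_a;\mathcal{I}_{c_1}\mathcal{I}_{c_2}}(\tilde H^{c_1}_{L_{c_1}+1;[L_{c_1}]}\otimes\tilde H^{c_2}_{L_{c_2}+1;[L_{c_2}]})^{\dagger}$, the triangle inequality through $\hat T$ giving $\|T-\doublehat{T}\|_F\le(1+\sqrt 2)\|E\|_F$ via Lemma~\ref{lemma:TT approx}, the factor $\|(\tilde H^{c_1}\otimes\tilde H^{c_2})^\dagger\|_2=1/(\sigma_r(\tilde H^{c_1})\sigma_r(\tilde H^{c_2}))$, and the substitution of Corollary~\ref{corollary:two TT} and Lemma~\ref{lemma:eig lower bound} are exactly the paper's steps; your bookkeeping of the powers of $\kappa$ and the final Taylor expansion are also correct.

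The one genuine defect is the ``closing step'' you flag yourself, and it comes from swapping the roles of $T$ and $\hat T$. It is $\hat T$, not $T$, that factors exactly through $\tilde H^{[d]\setminus a}$: by \eqref{Tdef} and Figure~\ref{figure:Hdef}, $\hat T_{\mathcal{I}_a;\mathcal{I}_{c_1},\mathcal{I}_{c_2}} = H^a_{[L_a];L_a+1,L_a+2}\,\tilde H^{[d]\setminus a}_{d-L_a+1,d-L_a+2;[d-L_a]}$ (the same identity invoked in the proof of Lemma~\ref{lemma:eig lower bound}), whereas $T$ is the object in which $B^2$ has been replaced by its rank-one approximation and factors instead as $H^a(\tilde H^{c_1}\otimes\tilde H^{c_2})$. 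With the correct assignment, $\|\hat T\|_F\le\|H^a\|_F\,\|\tilde H^{[d]\setminus a}\|_2$ is immediate and the normalization by $\|H^a\|_F^2$ closes with no residual term; the ``self-referential correction of order $\kappa\sqrt{1-\alpha/\kappa^4}$'' you introduce is an artifact of the mix-up, and as written your argument only delivers the asymptotic $O(\delta\alpha+4\delta\kappa)$ statement rather than the exact constant $(1+\sqrt2)^2\kappa^4(1-\alpha/\kappa^4)/(\kappa^{-4}-2\sqrt r\sqrt{1-\alpha/\kappa^4})$ claimed in the proposition. A smaller point: the invertibility of $\tilde H^{c_1}\otimes\tilde H^{c_2}$ does not follow directly from Assumption~\ref{assumption:injectivity}, since each factor has one latent leg contracted with $p^b$ or $q^b$; the paper obtains it from the singular-value lower bound of Lemma~\ref{lemma:eig lower bound}, which is precisely why the proposition carries the proviso that the upper bound be positive.
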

\begin{proof}
  From Lemma~\ref{lemma:TT approx} and \eqref{E def}, we get 
  \begin{align}
	&\|  \doublehat{T}_{\mathcal{I}_a;\mathcal{I}_{c_1}\mathcal{I}_{c_2}}- T_{\mathcal{I}_a;\mathcal{I}_{c_1}\mathcal{I}_{c_2}} \|_F \cr
	=& \| \hat T_{\mathcal{I}_a;\mathcal{I}_{c_1}\mathcal{I}_{c_2}}(I\otimes P_2^*)(P_1^*\otimes I) - T_{\mathcal{I}_a;\mathcal{I}_{c_1}\mathcal{I}_{c_2}} \|_F \cr
	\leq& \| \hat T_{\mathcal{I}_a;\mathcal{I}_{c_1}\mathcal{I}_{c_2}}(I\otimes P_2^*)(P_1^*\otimes I) - \hat T_{\mathcal{I}_a;\mathcal{I}_{c_1}\mathcal{I}_{c_2}}\|_F + \| \hat T_{\mathcal{I}_a;\mathcal{I}_{c_1}\mathcal{I}_{c_2}} - T_{\mathcal{I}_a;\mathcal{I}_{c_1}\mathcal{I}_{c_2}}\|_F\cr
	\leq& (1+\sqrt{2})\|E\|_F. \label{TR to TT}
  \end{align}
  Recall that
  \begin{equation}
	\label{inverse T}
	H^a_{[L_a];L_a+1,L_a+2} = T_{\mathcal{I}_{a}; \mathcal{I}_{c_1},\mathcal{I}_{c_2}}(\tilde H^{c_1}_{L_{c_1}+1;[L_{c_1}]}\otimes \tilde H^{c_2}_{L_{c_2}+1;[L_{c_2}]})^{\dagger},
  \end{equation}
  where the existence of a full-rank pseudo-inverse is guaranteed by the singular value lower bound
  in Lemma~\ref{lemma:eig lower bound}, we have
  \begin{align}
	&\frac{\|H^a_{[L_a];L_a+1,L_a+2} - \doublehat{T}_{\mathcal{I}_{a}; \mathcal{I}_{c_1},\mathcal{I}_{c_2}}(\tilde H^{c_1}_{L_{c_1}+1;[L_{c_1}]}\otimes \tilde H^{c_2}_{L_{c_2}+1;[L_{c_2}]})^{\dagger}\|_F^2}{\|H^a_{[L_a];L_a+1,L_a+2} \|_F^2}\cr
	\leq& \frac{(1+\sqrt{2})^2\|E\|_F^2\|(\tilde H^{c_1}_{L_{c_1}+1;[L_{c_1}]}\otimes \tilde H^{c_2}_{L_{c_2}+1;[L_{c_2}]})^{\dagger}\|_2^2 }{ \|H^a_{[L_a];L_a+1,L_a+2} \|_F^2}\cr
	\leq& \frac{(1+\sqrt{2})^2\|E\|_F^2}{\sigma_r(\tilde H^{c_1}_{L_{c_1}+1;[L_{c_1}]})^2 \sigma_r(\tilde H^{c_2}_{L_{c_2}+1;[L_{c_2}]})^2 \|H^a_{[L_a];L_a+1,L_a+2} \|_F^2}\cr
	=&\frac{(1+\sqrt{2})^2\|\hat T \|_F^2}{\sigma_r(\tilde H^{c_1}_{L_{c_1}+1;[L_{c_1}]})^2 \sigma_r(\tilde H^{c_2}_{L_{c_2}+1;[L_{c_2}]})^2 \|H^a_{[L_a];L_a+1,L_a+2} \|_F^2}\frac{\|E\|_F^2}{\|\hat T\|_F^2}\cr
	\leq&\frac{(1+\sqrt{2})^2\|\tilde H^{[d]\setminus a}_{d-L_a+1,d-L_a+2;[d-L_a]} \|_2^2}{\sigma_r(\tilde H^{c_1}_{L_{c_1}+1;[L_{c_1}]})^2 \sigma_r(\tilde H^{c_2}_{L_{c_2}+1;[L_{c_2}]})^2}  \frac{\|E\|_F^2}{\|\hat T\|_F^2}\cr
	\leq& \frac{(1+\sqrt{2})^2}{\frac{1}{\kappa^6} - \frac{2 \sqrt{r}}{\kappa^2}\sqrt{ 1 - \frac{\alpha}{\kappa^4}}} \kappa^2(1-\frac{\alpha}{\kappa^4}).
  \end{align}
  The first inequality follows from \eqref{TR to TT} and \eqref{inverse T}, and the last inequality
  follows from Corollary~\ref{corollary:two TT} and Lemma~\ref{lemma:eig lower bound}.
\end{proof}
When $L_a=L_{c_1} = L_{c_2}=1$, applying Algorithm~\ref{alg:initialization} to $\hat T$ results
$\doublehat{T}$ (represented by the tensors $T^{a,L}, T^{a,C}$ and $T^{a,R}$). Therefore, this
proposition essentially implies $T^{a,C}$ approximates $H^a$ up to gauge transformation.

\bibliographystyle{amsplain}
\bibliography{bibref}

\end{document}